\newtheorem{theorem}{Theorem}[section]
\newtheorem{lemma}[theorem]{Lemma}
\newtheorem{proposition}[theorem]{Proposition}
\newtheorem{corollary}[theorem]{Corollary}
\newtheorem{predefinition}[theorem]{Definition}
\newenvironment{definition}{\begin{predefinition}\rm}{\end{predefinition}}
\newtheorem{preremark}[theorem]{Remark}
\newenvironment{remark}{\begin{preremark}\rm}{\end{preremark}}
\newtheorem{prenotation}[theorem]{Notation}
\newtheorem{preexample}[theorem]{Example}
\newenvironment{example}{\begin{preexample}\rm}{\end{preexample}}
\newtheorem{preclaim}[theorem]{Claim}
\newtheorem{prequestion}[theorem]{Question}
\newtheorem{preapplication}[theorem]{Application}
\numberwithin{equation}{section}
\newcommand \ZZ {{\mathbb Z}}
\newcommand \QQ {{\mathbb Q}}
\newcommand{\Q}{{\mathbb Q}}
\newcommand \NN {{\mathbb N}}
\newcommand  \FF {{\mathbb F}}
\newcommand \PP {{\mathbb P}^1}
\global\let\hom\undefined
\DeclareMathOperator{\hom}{Hom}
\DeclareMathOperator{\End}{End}
\newcommand \CA {{\mathcal A}}
\newcommand \CM {{\mathcal M}}
\newcommand \CT {{\mathcal T}}
\newcommand \cC {{\mathcal C}}
\newcommand \frf {{\mathfrak f}}
\newcommand \CO{{\mathfrak O}}
\newcommand \p{{\mathfrak p}}
\newcommand \CC {{\mathbb C}}
\newcommand \cf {{\mathfrak f}}
\newcommand \QM {{\QQ[\mu_m]}}
\newcommand \co {{\mathfrak o}}
\newcommand \CJ {{\mathcal J}}
\newcommand \hp {{\langle p\rangle}}
\newcommand \Kd {{K_d}}
\newcommand \Km {{K_m}}
\title{Newton polygons of cyclic covers of the projective line branched at three points}
\date{}
\author{Wanlin Li}
\address{Department of Mathematics,
University of Wisconsin,
Madison, WI 53706, USA}
\email{wanlin@math.wisc.edu}
\author{Elena Mantovan}
\address{Department of Mathematics,
California Institute of Technology
Pasadena, CA 91125, USA}
\email{mantovan@caltech.edu}
\author{Rachel Pries}
\address{Department of Mathematics, 
Colorado State University, 
Fort Collins, CO 80523, USA}
\email{pries@math.colostate.edu}
\author{Yunqing Tang}
\address{Department of Mathematics,
Princeton University,
Princeton, NJ 08540, USA}
\email{yunqingt@math.princeton.edu}
\begin{document}

\begin{abstract}
We review the Shimura-Taniyama method for computing the Newton polygon of an abelian variety with complex multiplication.
We apply this method to cyclic covers of the projective line branched at three points.
As an application, we produce multiple new examples of Newton polygons 
that occur for Jacobians of smooth curves in characteristic $p$.
Under certain congruence conditions on $p$, these include:
the supersingular Newton polygon for each genus $g$ with $4 \leq g \leq 11$;
nine non-supersingular Newton polygons with $p$-rank $0$ with $4 \leq g \leq 11$; and,
for all $g \geq 5$, the Newton polygon with $p$-rank $g-5$ having slopes $1/5$ and $4/5$.

MSC10: primary 11G20, 11M38, 14G10, 14H40, 14K22; secondary 11G10, 14H10, 14H30, 14H40

Keywords: curve, cyclic cover, Jacobian, abelian variety, moduli space, reduction, 
supersingular, Newton polygon, $p$-rank, Dieudonn\'e module, $p$-divisible group, complex multiplication, Shimura--Taniyama method.
\end{abstract}

\maketitle

\section{Introduction}

In positive characteristic $p$, there are several discrete invariants associated with abelian varieties,  
e.g., the $p$-rank, the Newton polygon, and the Ekedahl--Oort type.  
These invariants give information about the Frobenius morphism and 
the number of points of the abelian variety defined over finite fields.  
It is a natural question to ask which of these invariants can be realized by Jacobians of smooth curves.  

For any prime $p$, genus $g$ and $f$ such that $0 \leq f \leq g$, Faber and van der Geer prove in \cite{FVdG}
that there exists a smooth curve of genus $g$ defined over $\overline{\FF}_p$ which has $p$-rank $f$.
Much less is known about the Newton polygon, more precisely, the Newton polygon of the characteristic polynomial of Frobenius. 
For $g=1,2,3$, it is known that every possible Newton polygon occurs for a smooth curve of genus $g$.
Beyond genus $3$, very few examples of Newton polygons are known to occur. 
In \cite[Expectation 8.5.4]{oort05}, for $g \geq 9$,
Oort observed that 
it is unlikely for all Newton polygons to occur for Jacobians of smooth curves of genus $g$.

This project focuses on Newton polygons of cyclic covers of the projective line ${\mathbb P}^1$.  
One case which is well-understood is when the cover is branched at $3$ points, especially when the cover is of prime degree (see \cite{GR}, \cite{honda}, \cite{Weil}, \cite{yui}).
In this case, the Jacobian is an abelian variety with complex multiplication and its 
Newton polygon can be computed using the Shimura--Taniyama theorem. 
In Section \ref{sec_dim0}, we give a survey of this material, including composite degree.  
Although this material is well-known, it has not been systematically analyzed for this application.
We use this method to tabulate numerous Newton polygons having $p$-rank $0$ 
which occur for Jacobians of smooth curves.

We now describe the main results of this paper in more detail.
By \cite[Theorem 2.1]{VdGVdV}, if $p=2$ and $g \in \NN$, then there exists a supersingular curve of genus $g$ defined over 
$\overline{\FF}_2$.  
For this reason, we restrict to the case that $p$ is odd in the following result.
In the first application, we verify the existence of supersingular curves in the following cases.
In Remark \ref{Runlikely}, we explain why the $g=9$, $g=10$ and $g=11$ cases are especially interesting.

\begin{theorem} \label{Tintro1} (See Theorem \ref{Tapp1})
Let $p$ be odd.
There exists a smooth supersingular curve of genus $g$ defined over $\overline{\FF}_p$ in the following cases:\\
$g=4$ and $p \equiv 2 \bmod 3$, or $p \equiv 2,3,4 \bmod 5$;\\
$g=5$ and $p \equiv 2,6,7,8,10 \bmod 11$;\\
$g=6$ and $p \not\equiv 1,3,9 \bmod 13$ or $p \equiv 3,5,6 \bmod 7$;\\
$g=7$ and $p \equiv 14 \bmod 15$ or $p \equiv 15 \bmod 16$;\\ 
$g=8$ and $p \not\equiv 1 \bmod 17$; \\
$g=9$ and $p \equiv 2,3,8,10,12,13,14,15,18 \bmod 19$;\\
$g=10$ and $p \equiv 5,17,20 \bmod 21$;\\
$g=11$ and $p \equiv 5,7,10,11,14,15,17,19,20,21,22 \bmod 23$.
\end{theorem}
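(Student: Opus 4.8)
The strategy is to realize each supersingular curve as a member of a family of cyclic covers of $\PP$ branched at three points, so that its Jacobian has complex multiplication and the Newton polygon can be read off from the Shimura--Taniyama formula developed in Section \ref{sec_dim0}. Concretely, for a fixed integer $m$ and a branching datum $a = (a_1, a_2, a_3)$ with $a_1 + a_2 + a_3 \equiv 0 \bmod m$ (or an inhomogeneous variant where a fourth ``index at infinity'' is implicit), one considers the smooth projective curve $y^m = x^{a_1}(x-1)^{a_2}$ over $\overline{\FF}_p$; its Jacobian carries an action of $\ZZ[\mu_m]$, and the signature of this CM type is determined combinatorially by the $a_i$. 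The Shimura--Taniyama method then expresses the slopes of the Newton polygon, orbit by orbit under multiplication by $p$ on $(\ZZ/m\ZZ)^\times$, in terms of the fractional parts $\langle a_i t/m\rangle$. The curve is supersingular precisely when, in every such orbit, the relevant sum of fractional parts is constant equal to the orbit length over $2$ — equivalently, when every slope that appears equals $1/2$.

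First I would, for each genus $g$ in the list, exhibit an explicit pair $(m, a)$ for which the corresponding cyclic cover is a smooth curve of genus exactly $g$; the genus is computed by the usual Riemann--Hurwitz count for cyclic covers branched at three points, and one checks it equals $g$. Next I would compute the signature (CM type) attached to $(m,a)$ and feed it into the Shimura--Taniyama formula from Section \ref{sec_dim0}. The congruence conditions on $p$ in the statement are exactly the conditions under which the multiplicative orbits of $p$ in $(\ZZ/m\ZZ)^\times$ are large enough (typically, $p$ generates, or has small index in, the group, so the orbit of each unit has even length) to force every local slope to be $1/2$. So the bulk of the argument for each case is: (i) name $(m,a)$; (ii) verify the genus; (iii) translate ``supersingular'' into a statement about $\langle a_i t/m\rangle$ summing correctly over each $p$-orbit; (iv) check that the stated congruence on $p$ implies this. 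For several $g$ there are two alternative $(m,a)$ (reflected by the ``or'' in the statement, e.g. $m=7$ vs.\ $m=13$ for $g=6$), and one simply runs the argument for whichever applies.

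A subtlety to address is that $y^m = x^{a_1}(x-1)^{a_2}$ typically has a Jacobian isogenous to a product whose factors correspond to the various $\QQ(\mu_d)$ for $d \mid m$; the Newton polygon of the whole Jacobian is the concatenation of the Newton polygons of these isotypic pieces, so I must ensure that the chosen congruence on $p$ makes \emph{every} piece supersingular, not just the top one. This is the standard reason the congruence conditions are phrased modulo $m$ rather than modulo the conductor, and it is essentially the same bookkeeping as in the prime-degree case treated in \cite{GR}, \cite{yui}; I would invoke the general version of the Shimura--Taniyama computation recorded earlier in the paper to handle all $d \mid m$ uniformly.

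The main obstacle is the \emph{existence of a suitable $(m,a)$ at all}: for each target genus $g$ between $4$ and $11$ one needs a three-point cyclic cover whose genus is exactly $g$ \emph{and} whose CM type has the ``supersingular-compatible'' shape for infinitely many $p$ (i.e.\ the signature at every CM piece is balanced in the sense required by Shimura--Taniyama). Not every $g$ admits an obvious such $m$ — this is why the list stops at $g=11$ — so the real content is the search for these $(m,a)$, and for each the verification that the congruence class of $p$ modulo $m$ guarantees all orbit-sums are ``half-integral.'' Once the right covers are in hand, steps (ii)--(iv) are mechanical applications of the formulas in Section \ref{sec_dim0}. I would therefore present the proof as a table of data $(g; m; a; \text{congruence on }p)$ together with the orbit computations, citing Theorem \ref{Tapp1} and the Shimura--Taniyama results above for the uniform parts of the argument.
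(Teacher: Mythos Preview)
Your plan is correct and follows essentially the same route as the paper: exhibit an explicit $(m,a)$ for each genus, verify the genus via \eqref{Egenus}, and apply the Shimura--Taniyama formula of Section~\ref{sec_dim0} orbit-by-orbit to conclude supersingularity under the stated congruences on $p$.

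The one streamlining the paper uses that you do not invoke explicitly is Corollary~\ref{ST_ss}: if every $\sigma$-orbit $\co\in\CO'$ is self-dual (i.e., $\co=\co^*$), then the Jacobian is supersingular \emph{regardless of the inertia type $a$}, because $\cf(\tau)+\cf(\tau^*)=1$ forces $\alpha_\co=\beta_\co$ on any self-dual orbit. So one never needs to compute the signature at all; one only checks that $-1\in\langle p\rangle\subset(\ZZ/d\ZZ)^*$ for each relevant $d\mid m$ (Remark~\ref{pinert}). This is why the table in Theorem~\ref{Tapp1} records ``any $a$'' for $m=11,13,17,19,23$. Your step~(iii) would of course reproduce this conclusion, just with redundant case-work on the signature. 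One caution on your heuristic: ``orbit of even length'' is not the correct criterion (for instance $2$ has order $4$ in $(\ZZ/15\ZZ)^*$ yet $-1\notin\langle 2\rangle$, and indeed $p\equiv 2\bmod 15$ does \emph{not} appear in the $g=7$ row); the precise condition is self-duality of each orbit, equivalently $-1\in\langle p\rangle$ modulo every relevant $d\mid m$.
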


The second application is Theorem \ref{TotherNP}: under certain congruence conditions on $p$, 
we prove that nine new Newton polygons that have $p$-rank $0$ but are not supersingular occur 
for Jacobians of smooth curves.   

For context for the third application,
recall that every abelian variety is isogenous to a factor of a Jacobian.  
This implies that every rational number $\lambda \in [0,1]$ occurs as a 
slope for the Newton polygon of the Jacobian of a smooth curve. 
In almost all cases, however, there is no control over the other slopes in the Newton polygon.

In the third application, when $d=5$ or $d=11$, under congruence conditions on $p$, we show 
that the slopes $1/d$ and $(d-1)/d$ occur 
for a smooth curve in characteristic $p$ of arbitrarily large genus with complete control over the other slopes in the Newton polygon.
Namely, under these congruence conditions on $p$ and for all $g \geq d$, 
we prove that there exists a smooth curve of genus $g$ defined over $\overline{\FF}_p$ whose Newton polygon
contains the slopes $1/d$ and $(d-1)/d$ with multiplicity $d$ and slopes $0$ and $1$ with multiplicity $g-d$.
This was proven earlier, for all $p$, when $d=2$ \cite[Theorem 2.6]{FVdG}; 
$d=3$ \cite[Theorem 4.3]{Pr:large}; and $d=4$ \cite[Corollary 5.6]{AP:gen}.

Let $G_{1, d-1} \oplus G_{d-1,1}$ denote the $p$-divisible group with slopes $1/d, (d-1)/d$.

\begin{theorem} \label{Tintro2} (See Theorem \ref{Tapp2})
For the following values of $d$ and $p$ and for all $g \geq d$, 
there exists a smooth curve of genus $g$ defined over $\overline{\FF}_p$
whose Jacobian has $p$-divisible group isogenous
to $(G_{1, d-1} \oplus G_{d-1,1}) \oplus (G_{0,1} \oplus G_{1,0})^{g-d}$:
\begin{enumerate}
\item $d=5$ for all $p \equiv 3,4,5,9 \bmod 11$;
\item $d=11$ for all $p \equiv 2,3,4,6,8,9,12,13,16,18 \bmod 23$.
\end{enumerate}
\end{theorem}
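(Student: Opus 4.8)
The plan is to build the curve of genus $g$ in two stages: first produce a cyclic cover of $\PP$ branched at three points whose Jacobian has $p$-divisible group isogenous to $G_{1,d-1}\oplus G_{d-1,1}$ and genus exactly $d$, then degenerate/glue it to a totally degenerate curve to absorb the extra $g-d$ slopes $0$ and $1$. For the first stage, I would consider the cyclic cover $y^d = x^{a}(1-x)^{b}$ for a suitable choice of inertia type $(a, b, d-a-b)$ at the three branch points $0, 1, \infty$, chosen so that the Jacobian is simple of CM type and has exactly the two slopes $1/d$ and $(d-1)/d$. The Shimura--Taniyama method, surveyed in Section~\ref{sec_dim0}, computes the Newton polygon of such a CM abelian variety from the action of $\Gal(\overline{\QQ}/\QQ)$ on the CM type, i.e. from the orbits of multiplication-by-$p$ on the set of characters indexing the $\muord$-eigenspaces of $H^1$. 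For this to yield only slopes $1/d$ and $(d-1)/d$, the relevant congruence condition on $p$ in each case (the lists in item (1) and item (2) of the statement) must force each Frobenius orbit on the character set to split the eigenspace dimensions so that the corresponding Newton slope equals $1/d$ or $(d-1)/d$; this is exactly the kind of computation the survey section is designed to carry out, and I expect the congruence classes listed are precisely those $p$ for which the resulting CM type is ``balanced'' of type $(1, d-1)$.

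For the second stage, I would invoke a degeneration argument in the style of Faber--van der Geer \cite{FVdG}, Pries \cite{Pr:large}, and Achter--Pries \cite{AP:gen}: the curve $C_0$ of genus $d$ constructed above lies in a family (e.g. the appropriate moduli space $\CM$ of cyclic covers, or more simply it sits inside $\CA_d$), and one attaches to $C_0$, at a smooth point, a chain or wedge of $g-d$ elliptic curves, or more efficiently a totally degenerate (``banana'') component, to obtain a stable curve of genus $g$ whose Jacobian has $p$-divisible group isogenous to $\bigl(G_{1,d-1}\oplus G_{d-1,1}\bigr)\oplus (G_{0,1}\oplus G_{1,0})^{g-d}$. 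One then uses a purity/semicontinuity argument for the Newton polygon on the moduli space $\overline{\CM}_g$ together with the fact that this target Newton polygon is as small as possible given that it must contain the slopes $1/d, (d-1)/d$: since the Newton polygon is lower semicontinuous and cannot go below the one forced by the CM factor, a generic smooth deformation of the stable curve has the same Newton polygon. Concretely, I would appeal to a clutching/deformation lemma (the ones cited for $d=2,3,4$ have exactly this shape) asserting that a smooth curve deforming such a stable curve and having this prescribed Newton polygon exists.

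The steps, in order: (i) write down the explicit cyclic triple cover for $d=5$ and $d=11$, compute its genus and verify it equals $d$; (ii) apply the Shimura--Taniyama recipe to determine the Newton polygon of its Jacobian as a function of $p \bmod 2d+1$ (here $11$ and $23$), and check that for the congruence classes listed the Newton polygon is exactly $G_{1,d-1}\oplus G_{d-1,1}$ — equivalently that the CM type is not supersingular but purely of slopes $1/d, (d-1)/d$; (iii) form the stable curve of genus $g$ by clutching on $g-d$ copies of a $p$-rank $0$... no — of an ordinary elliptic curve (to get slopes $0$ and $1$), and (iv) invoke the deformation/purity result to conclude the existence of a smooth curve of genus $g$ with the asserted $p$-divisible group. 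The main obstacle is step (ii): one must identify the precise set of residues of $p$ for which the Frobenius orbits on the eigencharacters produce only the slopes $1/d$ and $(d-1)/d$, and verify the CM type genuinely has these slopes rather than degenerating further to supersingular or splitting off unwanted slopes; this is the heart of the computation and the reason the statement is restricted to $d = 5, 11$ with these explicit congruence lists. The clutching step (iii)--(iv) is by now standard and parallels \cite{FVdG}, \cite{Pr:large}, \cite{AP:gen}, so I do not expect difficulty there beyond citing the right lemma.
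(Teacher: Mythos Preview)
Your overall strategy matches the paper's: produce a genus-$d$ cyclic cover branched at three points with Newton polygon $(1/d,(d-1)/d)$ via Shimura--Taniyama, then pass to arbitrary $g\geq d$ by a clutching/deformation argument of the type in \cite{FVdG}, \cite{Pr:large}, \cite{AP:gen}. Two points need correction.

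First, the base curve is not $y^d=x^a(1-x)^b$: a degree-$d$ cover of $\PP$ branched at three points has genus at most $(d-1)/2$, never $d$. The paper uses degree $m=2d+1$ covers ($m=11$ for $d=5$, $m=23$ for $d=11$); see Examples~\ref{Esgg5} and~\ref{Esgg11}. You do identify the correct modulus $2d+1$, so this looks like a slip rather than a conceptual error, but the equation as written cannot work.

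Second, your justification of step (iv) is not right as stated. Once you deform the clutched stable curve to a smooth curve, there is no ``CM factor'' anymore, so nothing forces the slopes $1/d,(d-1)/d$ to persist; semicontinuity alone allows the Newton polygon to become strictly more ordinary. The constraint you need is not ``must contain the slopes $1/d,(d-1)/d$'' (which is not preserved under deformation) but rather the $p$-rank, which \emph{is} controlled via the $p$-rank stratification of $\overline{\CM}_g$. The paper makes exactly this move: it notes that $(1/d,(d-1)/d)$ is the \emph{lowest} Newton polygon in dimension $d$ with $p$-rank $0$, so the component of the $p$-rank-$0$ stratum of $\CM_d$ containing the CM curve has this Newton polygon \emph{generically} (it cannot drop by semicontinuity, and cannot rise without acquiring positive $p$-rank). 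With that established, the paper cites \cite[Corollary~6.4]{priesCurrent}, which is precisely the packaged clutching/deformation lemma you anticipate, but whose input is ``generic Newton polygon on a component of the $p$-rank-$0$ stratum'' rather than a single CM point.
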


In future work, we determine new results about Newton polygons of curves arising in positive-dimensional special families of cyclic covers
of the projective line.
This work relies on the Newton polygon stratification of PEL-type Shimura varieties. 
Then we attack the same questions for arbitrarily large genera using a new induction argument for Newton polygons 
of cyclic covers of ${\mathbb P}^1$.
We use the Newton polygons found in this paper as base cases in this induction process. 

\subsection*{Organization of the paper} \mbox{ } \\

Section \ref{sec_prelim} contains basic definitions and facts about group algebras, cyclic covers of ${\mathbb P}^1$, and Newton polygons. 

Section \ref{sec_dim0} focuses on the Jacobians of cyclic covers branched at exactly three points.  We review the 
Shimura--Taniyama method for computing the Newton polygon and provide examples. 

Section \ref{sec_table} contains tables of data.

Section \ref{Sapplication} contains the proofs of the three theorems. 

\subsection*{Acknowledgements} \mbox{ } \\
This project began at the \emph{Women in Numbers 4} workshop at the Banff International Research Station.
Pries was partially supported by NSF grant DMS-15-02227.
We thank the referee for the valuable feedback and comments.

\section{Notation and background} \label{sec_prelim} 

\subsection{The group algebra $\QM$}\label{prelim_gpalg} \mbox{ } \\

For an integer $m\geq 2$, let $\mu_m:=\mu_m(\CC)$ denote the group of $m$-th roots of unity in $\CC$. 
For each positive integer $d$, we fix a primitive $d$-th root of unity $\zeta_d=e^{2\pi i/d}\in\CC$.
Let $K_d=\QQ(\zeta_d)$ be the $d$-th cyclotomic field over $\QQ$ of degree $\phi(d)$.

Let $\QM$ denote the group algebra of $\mu_m$ over $\QQ$.
It has an involution $*$ induced by the inverse map on $\mu_m$, i.e., $\zeta^*:= \zeta^{-1}$ for all $\zeta\in\mu_m$.  
The $\QQ$-algebra $\QM$ decomposes as a product of finitely many cyclotomic fields, namely
\[\QM=\prod_{0<d\mid m}K_d.\]
The involution $*$ on $\QM$ preserves each cyclotomic factor $K_d$, and for each $d\mid m$, the restriction of $*$ to $K_d$ agrees with complex conjugation.

Let $\CT$ denote the set of homomorphisms $\tau:\QM\to\CC$.
In the following, we write \[\QM\otimes_\QQ\CC=\prod_{\tau\in\CT} \CC,\] and for each $(\QM\otimes_\QQ\CC)$-module $W$, we write $W=\oplus_{\tau\in\CT} W_\tau$, where $W_\tau$ denotes the subspace of $W$ on which $a\otimes 1\in\QM\otimes_\QQ\CC$ acts as $\tau(a)$.

For convenience, we fix an identification $\CT=\ZZ/m\ZZ$ by defining, for $n\in \ZZ/m\ZZ$, \[\tau_n(\zeta):=\zeta^n, \text{ for all }\zeta\in\mu_m.\]
Note that, for any $n\in\ZZ/m\ZZ$, and $a\in\QM$, \[{\tau}_{-n}(a)=\tau_{n}(a^*)=\overline{\tau_n(a)},\]
where $z\mapsto \overline{z}$ denotes complex conjugation on $\CC$. In the following, we write $\tau_n^*:=\tau_{-n}$.

\begin{remark}
For each $\tau\in\CT$, the homomorphism $\tau:\QM\to\CC$ factors via a projection $\QM\to K_d$, for a unique positive divisor $d$ of $m$. We refer to $d$ as {\em the order of} $\tau$. Indeed, for each $n\in\ZZ/m\ZZ$, the homomorphism $\tau_n$ factors via the cyclotomic field $K_d$ if  and only if $d$ is the exact order of $n$ in $\ZZ/m\ZZ$.
\end{remark}

For each rational prime $p$, we fix an algebraic closure $\QQ_p^{\rm alg}$of $\QQ_p$, and an identification $\CC\simeq \CC_p$, where $\CC_p$ denotes the $p$-adic completion of  $\QQ_p^{\rm alg}$. We denote by
$\QQ_p^{\rm un}$ the maximal unramified extension of $\QQ_p$ in  $\QQ_p^{\rm alg}$, 
and by $\sigma$ the Frobenius of $\QQ_p^{\rm un}$. 

Assume that $p$ does not divide $m$. Then $\QM$ is unramified at $p$ (i.e., the group $\mu_m$ is \'etale over $\ZZ_p$), and, for each $\tau\in\CT$, the homomorphism $\tau:\QM\to \CC\simeq \CC_p$ factors via the subfield $ \QQ_p^{\rm un}\subset\CC_p$. In particular,
 \[\QM\otimes_\QQ \QQ_p^{\rm un}=\prod_{\tau\in\CT} \QQ_p^{\rm un}.\]
There is a natural action of the Frobenius $\sigma$ on the set $\CT$, defined by $\tau\mapsto \tau^\sigma:=\sigma\circ \tau$. 
Then $\tau_n^\sigma=\tau_{pn}$ for all $n\in\ZZ/m\ZZ$. 

We write $\CO$ for the set of $\sigma$-orbits $\co$ in $\CT$.  For each $\tau\in\CT$, we denote by $\co_\tau$ its $\sigma$-orbit. The set $\CO$ is in one-to-one correspondence with the set of primes $\p$ of $\QM$ above $p$. 
We write $\p_\co$ for the prime above $p$ associated with an orbit $\co$ in $\CT$. For each $\sigma$-orbit $\co\in \CT$, the order of $\tau$ is the same for all $\tau\in \co$ and we denote this order by $d_\co$. Let $K_{d_\co, \p_\co}$ denote the completion of $K_{d_\co}$ along the prime $\p_\co$. Then \[\QM\otimes_\QQ\QQ_p=\prod_{\co\in\CO} K_{d_\co, \p_\co}.\]

\subsection{Cyclic covers of the projective line}\label{prelim_curve} \mbox{ } \\

Fix an integer $m\geq 2$, together with a triple of positive integers $a=(a(1),a(2), a(3))$. 
We refer to such a pair $(m,a)$ as a {\em monodromy datum} if
\begin{enumerate}
\item $a(i)\not\equiv 0\bmod m$, for all $i=1, 2, 3$, 
\item $\gcd(m, a(1), a(2), a(3))=1$, 
\item $a(1)+a(2)+a(3)\equiv 0 \bmod m$.
\end{enumerate}

Fix a monodromy datum $(m,a)$. 
The equation 
\begin{equation} \label{Ecurve}
y^m=x^{a(1)} (x-1)^{a(2)}
\end{equation}
defines a smooth projective curve $C=C_{(m,a)}$ defined over $\QQ$. 
The function $x$ on $C$ yields a map $C \to {\mathbb P}^1$,
and there is a $\mu_m$-action on $C$ over $ {\mathbb P}^1$ given by $\zeta\cdot (x,y)=(x,\zeta\cdot y)$ for all $\zeta\in\mu_m$ (more precisely, this action is defined on the base change of $C$ from $\QQ$ to $K_m$).
The curve $C$, together with this $\mu_m$-action, is a $\mu_m$-Galois cover of the projective line $\PP$;
it is branched at $ 0,1,\infty$ and has local monodromy $a(1)$ at $0$, $a(2)$ at $1$ and $a(3)$ at $\infty$.
By the hypotheses on the monodromy datum, for primes $p \nmid m$ the reduction of $C$ at $p$ is a geometrically irreducible curve of genus $g$, where
\begin{equation} \label{Egenus}
g=g(m,a)=1+\frac{m-\gcd(a(1),m)-\gcd(a(2),m)-\gcd(a(3),m)}{2}.
\end{equation}

\begin{remark}
The isomorphism class of the curve $C=C_{(m,a)}$ depends only on the equivalence class of the monodromy datum $(m,a)$, where two monodromy data $(m,a)$ and $(m',a')$ are equivalent if $m=m'$,  and the images of $a,a'$ in $(\ZZ/m\ZZ)^3$ are in the same orbit under the action of $(\ZZ/m\ZZ)^*\times\Sigma_3$, where $\Sigma_3$ is the symmetric group of degree $3$.
\end{remark}

Let $V:=H^1(C(\CC), \QQ)$ denote the first Betti cohomology group of $C$. 
Then $V$ is a $\QM$-module, and there is a decomposition $V\otimes_{\QQ}\CC=\oplus_{\tau \in \CT}V_\tau$. 
In addition, $V$ has a Hodge structure of type $(1,0)+(0,1)$ ,with the $(1,0)$ piece given by $H^0(C(\CC), \Omega^1_{C})$ via the Betti--de Rham comparison. We denote by $V^+$ (resp.\ $V^-)$ the $(1,0)$ (resp.\ $(0,1)$) piece. Both $V^+$ and $V^-$ are $\QM$-modules, so there are decompositions
\[V^+=\oplus_{\tau\in \CT} V^+_\tau \ {\rm and} \  \quad V^-=\oplus_{\tau\in \CT} V^-_\tau.\]

Let $\cf(\tau):=\dim_{\CC} V^+_\tau$.  For any $q\in \QQ$, let $\langle q\rangle$ denote the fractional part of $q$. 
By \cite[Lemma 2.7, Section 3.2]{moonen} (see also \cite{deligne-mostow}),
\begin{equation}\label{DMeqn}
\cf(\tau_n)=\begin{cases} -1+\sum_{i=1}^3\langle\frac{-na(i)}{m}\rangle \text{ if $n\not\equiv 0 \bmod m$}\\
0\text{ if $n\equiv 0 \bmod m$}.\end{cases}
\end{equation}
We call  $\cf=(\cf(\tau_1), \ldots, \cf(\tau_{m-1}))$ the \emph{signature type} of the monodromy datum $(m,a)$.

\begin{remark}\label{rmk_dimV}
Let $n(\tau):=\dim_\CC V_\tau$.
For all $\tau\in \CT$, one sees that $\dim_\CC V^+_{\tau^*}=\dim_\CC V^-_{\tau}$ and thus $\cf(\tau)+\cf(\tau^*)=n(\tau)$.  
Note that $n(\tau)$ depends only on the order of $\tau$, and thus only on the orbit $\co_\tau$.
If $\co \in \CO$, we sometimes write $n(\co)=n(\tau)$, for any $\tau\in\co$.
\end{remark}

\subsection{Newton polygons}\label{prelim_NP} \mbox{ } \\

Let $X$ denote a $g$-dimensional abelian scheme over an algebraically closed field $\FF$ of positive characteristic $p$.

If $\FF$ is an algebraic closure of $\FF_p$, the finite field of $p$ elements, then there exists a finite subfield $\FF_0\subset \FF$ such that $X$ is isomorphic to the base change to $\FF$ of an abelian scheme $X_0$ over $\FF_0$. Let $W(\FF_0)$ denote the Witt vector ring of $\FF_0$.  Consider the action of Frobenius $\varphi$ on the crystalline cohomology group $H^1_{\rm cris}(X_0/W(\FF_0))$. There exists an integer $n$ such that $\varphi^n$, the composite of $n$ Frobenius actions, is a linear map on $H^1_{\rm cris}(X_0/W(\FF_0))$.
The Newton polygon $\nu(X)$ of $X$ is defined as the multi-set of rational numbers $\lambda$
such that $n\lambda$ are the valuations at $p$ of the eigenvalues of Frobenius for this action. Note that the Newton polygon is independent of the choice of $X_0$, $\FF_0$, and $n$.

Here is an alternative definition, which works for arbitrary algebraically closed field $\FF$.  
For each $n \in \NN$, consider the multiplication-by-$p^n$ morphism $[p^n]:X \to X$ and its kernel $X[p^n]$.
The $p$-divisible group of $X$ is denoted by $X[p^\infty] = \varinjlim X[p^n]$.
For each pair $(c,d)$ of non-negative relatively prime integers, fix
a $p$-divisible group $G_{c,d}$ of codimension $c$, dimension $d$, and thus height $c+d$.
By the Dieudonn\'e--Manin classification \cite{maninthesis}, there is an isogeny of $p$-divisible groups 
\[X[p^\infty] \sim \oplus_{\lambda=\frac{d}{c+d}} G_{c,d}^{m_\lambda},\] where $(c,d)$ ranges over pairs of 
non-negative relatively prime integers.
The Newton polygon is the multi-set of values of the slopes $\lambda$.
By identifying $H^1_{\rm cris}(X/W(\FF))$ with the (contravariant) Dieudonn\'e module of $X$, 
it is possible to show that these definitions are equivalent.

The slopes of the Newton polygon are in $\QQ \cap [0,1]$.
The Newton polygon is typically drawn as a lower convex polygon, 
with endpoints $(0,0)$ and $(2g,g)$ and slopes equal to the values of $\lambda$, with multiplicity $(c+d)m_\lambda$.  
It is symmetric and has integral breakpoints. 
The Newton polygon is an isogeny invariant of $A$; it is determined by the multiplicities $m_\lambda$.

Given an abelian variety or $p$-divisible group $\CA$ defined over a local field of mixed characteristic $(0,p)$, by abuse of notation, we may write $\nu(\CA)$ for the Newton polygon of its special fiber.

In this paper, we use $ord$ to denote the Newton polygon with slopes $0,1$ with multiplicity $1$ and $ss$ to denote the Newton polygon with slope $1/2$ with multiplicity $2$. For $s<t \in \ZZ_{>0}$ with ${\rm gcd}(s,t)=1$, we use $(s/t, (t-s)/t)$ to denote the Newton polygon with slopes $s/t$ and  $(t-s)/t$ with multiplicity $t$. 

\begin{definition}
The \emph{$p$-rank} of $X$ is defined to be $\dim_{\FF_p}\hom(\mu_p,X)$. Equivalently, the $p$-rank of $X$ is the multiplicity of the slope $0$ in the Newton polygon.
\end{definition}

\begin{definition}
Given a finite set of lower convex polygons $\{\nu_i \mid i=1,\dots ,n\}$, $n\geq 2$, each $\nu_i$ having end points $(0,0)$ and $(h_i, d_i)$
and with slope $\lambda$ occurring with multiplicity $m_{i, \lambda}$,
their {\em amalgamate sum} $\sum_{i=1}^n\nu_i$ is the lower convex polygon having end points $(0,0)$ and $(\sum_{i=1}^nh_i, \sum_{i=1}^nd_i)$ 
and with slope $\lambda$ occurring with multiplicity $\sum_{i=1}^n m_{i, \lambda}$.
\end{definition}

For any finite set of $p$-divisible groups $\{G_i \mid i=1,\dots , n\}$, $n\geq 2$, with Newton polygons $\nu(G_i)$,
the Newton polygon of the $p$-divisible group $\oplus _{i=1}^n G_i$ is the amalgamate sum $\sum_{i=1}^n\nu(G_i)$.

\section{Newton polygons of curves with complex multiplication}\label{sec_dim0}

As in Section \ref{prelim_curve}, we fix a monodromy datum $(m,a)$, and consider the $\mu_m$-Galois cover
$C_{(m,a)} \to \PP$ branched at $0,1, \infty$ with local monodromy $a=(a(1),a(2),a(3))$
as in \eqref{Ecurve}. We write $C=C_{(m,a)}$ and let $J=J_{(m,a)}$ be the Jacobian ${\rm Jac}(C)$ of $C$.  The action of $\mu_m$ on $C$ induces an action of $\QM$ on $J$.
Also, the equation of $C$ naturally defines integral models $\cC$ and $\CJ={\rm Jac}(\cC)$ of $C$ and $J$ over $\ZZ$ \cite[Section 4]{wewersthesis};
the curve $\cC$ has good reduction at all primes $p$ such that $p\nmid m$ by \cite[XIII, Corollary~2.12, Proposition 5.2]{SGA1}.

\subsection{Shimura--Taniyama method}\label{sec_ST} \mbox{ } \\

It is well-known that $J$ is an abelian variety with complex multiplication, but we record a proof here with a refined statement on the CM algebra contained in $\End(J_{\QQ^{\rm alg}})\otimes \QQ$.

We say that an abelian variety $A$ over $\QQ^{\rm alg}$ has complex multiplication (CM) by a $\QQ$-algebra $E$ if $E$ is an \'etale $\QQ$-subalgebra of $\End(A_{\QQ^{\rm alg}})\otimes\QQ$ of degree $2\dim A$ over $\QQ$. 
In particular, if $E=\prod E_i$ then $A$ has CM by $E$ if and only if $A$ is isogenous to $\prod A_i$ with $A_i$ an abelian variety with CM by $E_i$. Also, if $A$ has CM by $E$ then $H_1(A,\QQ)$ is free of rank $1$ over $E$ (\cite[Definition 3.2, and Proposition 3.6]{milneCM}).

\begin{lemma}\label{lem_CM}
The abelian variety $J$ has complex multiplication by $\prod_{d} K_d$, where the product is taken over all $d$ such that 
$1<d\mid m$ and $d\nmid a(i)$ for any $i=1,2,3$. 
\end{lemma}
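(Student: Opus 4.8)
The plan is to decompose the $\QM$-module $V = H^1(C(\CC),\QQ)$ according to the product decomposition $\QM = \prod_{0 < d \mid m} K_d$ and to identify exactly which factors act nontrivially on $V$. Concretely, write $V = \bigoplus_{d \mid m} V^{(d)}$, where $V^{(d)}$ is the part of $V$ on which $\QM$ acts through the projection to $K_d$; equivalently, after tensoring with $\CC$, $V^{(d)} \otimes \CC = \bigoplus_{\tau} V_\tau$ with $\tau$ ranging over the homomorphisms $\tau_n$ of exact order $d$. Since $\End(J_{\QQ^{\rm alg}}) \otimes \QQ \supseteq \QM$ acts on $J$ and $H_1(J,\QQ) \cong V^\vee$ is the relevant module, the first task is to compute $\dim_\CC V_\tau = n(\tau)$ for each $\tau$ and to show $V^{(d)} \neq 0$ precisely when $1 < d \mid m$ and $d \nmid a(i)$ for all $i$.

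First I would record, via Remark \ref{rmk_dimV}, that $n(\tau_n) = \cf(\tau_n) + \cf(\tau_{-n})$, and then use the Deligne--Mostow/Moonen formula \eqref{DMeqn} to evaluate this sum. For $n \not\equiv 0 \bmod m$ one gets $n(\tau_n) = -2 + \sum_{i=1}^3\left(\langle \tfrac{-na(i)}{m}\rangle + \langle \tfrac{na(i)}{m}\rangle\right)$, and since $\langle q \rangle + \langle -q \rangle$ equals $1$ if $q \notin \ZZ$ and $0$ if $q \in \ZZ$, this becomes $n(\tau_n) = 1 - \#\{\, i : na(i) \equiv 0 \bmod m \,\}$. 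Now if $\tau_n$ has exact order $d$, then $na(i) \equiv 0 \bmod m$ is equivalent to $d \mid a(i)$; using the monodromy hypothesis $\gcd(m,a(1),a(2),a(3)) = 1$ (which forbids $d$ dividing all three $a(i)$ when $d > 1$), the count $\#\{i : d \mid a(i)\}$ is at most $2$, so $n(\tau_n) \geq -1$; and it is $0$ exactly when $d \mid a(i)$ for precisely one $i$, and positive (in fact equal to $1$) when $d \nmid a(i)$ for all $i$. Wait — I should double check the case $\#\{i : d\mid a(i)\} = 2$: if $d \mid a(1)$ and $d \mid a(2)$ then $d \mid a(1)+a(2) \equiv -a(3) \bmod m$, hence $d \mid \gcd(a(3), m)$... but $d \mid a(3)$ would force $d \mid \gcd(m,a(1),a(2),a(3)) = 1$, contradiction, so in fact $\#\{i : d \mid a(i)\} \leq 1$ for $d > 1$. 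Thus $n(\tau_n) \in \{0, 1\}$, and $n(\tau_n) = 1$ iff $d \nmid a(i)$ for all $i$; it follows that for such $d$ we have $\dim_\CC V^{(d)} \otimes \CC = \phi(d) \cdot 1 = \phi(d) = [K_d : \QQ]$, while $V^{(d)} = 0$ otherwise.

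Next I would assemble this into the CM statement. Let $E = \prod_{d} K_d$ with the product over the $d$ in the lemma's list. Summing, $\dim_\QQ V = \sum_d \phi(d) = \dim_\QQ E$, and $V$ is a module over $E$ (the factors $K_{d'}$ with $V^{(d')} = 0$ act as zero). Since each $V^{(d)}$ has $\CC$-dimension $\phi(d)$ and is a module over the field $K_d$ of the same dimension, $V^{(d)}$ is free of rank $1$ over $K_d$, so $V$ is free of rank $1$ over $E$; in particular $\dim_\QQ V = 2\dim J = 2g$ matches $\dim_\QQ E$. Because $E$ is an étale $\QQ$-subalgebra of $\End(J_{\QQ^{\rm alg}}) \otimes \QQ$ (it is a product of number fields, hence étale, and it embeds via the $\mu_m$-action) of degree $2\dim J$, this is exactly the definition of $J$ having CM by $E$ recorded before the lemma. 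One can also sanity check the genus formula: $2g = 2 + \sum_{i=1}^3(\gcd(a(i),m) - 1)$ from \eqref{Egenus} should equal $\#\{n \in \ZZ/m\ZZ : n \not\equiv 0, \ \operatorname{ord}(n) \nmid a(i) \ \forall i\}$, which can be verified by an inclusion–exclusion over divisors $d$.

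The main obstacle is the combinatorial bookkeeping in Step two: correctly translating "$\tau_n$ has exact order $d$" into divisibility conditions on the $a(i)$, and carefully using each of the three monodromy conditions (especially $\gcd(m,a(1),a(2),a(3)) = 1$) to pin down that $n(\tau_n)$ is always $0$ or $1$ and to rule out the degenerate possibility that two of the $a(i)$ are simultaneously divisible by $d$. Everything else — the decomposition of the group algebra, freeness of rank one, and matching the dimension count with $2g$ — is then essentially formal, drawing on \eqref{DMeqn}, Remark \ref{rmk_dimV}, and the definition of CM quoted just above the lemma statement.
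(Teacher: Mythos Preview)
Your argument is correct and is essentially the paper's own proof: both compute $n(\tau_n)=\cf(\tau_n)+\cf(\tau_{-n})$ via \eqref{DMeqn}, use the monodromy conditions $\gcd(m,a(1),a(2),a(3))=1$ and $\sum a(i)\equiv 0\bmod m$ to show this equals $1$ when $d\nmid a(i)$ for all $i$ and $0$ otherwise, and conclude that $V$ is free of rank one over the indicated product of cyclotomic fields. (One small slip in your peripheral sanity check: from \eqref{Egenus} the correct expression is $2g = m+2-\sum_i\gcd(a(i),m)$, not $2+\sum_i(\gcd(a(i),m)-1)$; this plays no role in the actual proof.)
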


\begin{proof}
By Hodge theory, $J_{\overline{\QQ}}$ is isogenous to $\prod_{1<d\mid m}A_d$, where $A_d$ is an abelian variety whose first Betti cohomology group is isomorphic to $\oplus_{\tau\text{ of order }d}V_\tau$.

For $0\neq n\in \ZZ/m\ZZ$, the order $d$ of $n$ is $d=m/\gcd(n,m)$. 
Let $x_n$ be the number of elements in $a$ which are not divisible by $d$. 
If none of the $a(i)$ is divisible by $d$, then $x_n=3$. Otherwise, $x_n=2$ since $\gcd(m, a(1),a(2),a(3))=1$
and $a(1)+a(2)+a(3) \equiv 0 \bmod m$.
For example, when $\gcd(n,m)=1$, then $d=m$ and hence $x_n=3$. 
For example, if $m$ is even and $n=m/2$, then $d=2$ and $x_n=2$. 
By \eqref{DMeqn}, $\frf(\tau_n)+\frf(\tau_{-n})=x_n-2$.
Hence $A_d=\{1\}$, the trivial abelian variety if $x_n=2$, and $A_d$ is a simple abelian variety of dimension $[K_d:\QQ]/2$ such that $\End(A_d)\otimes \QQ=K_d$ otherwise.

Then $J$ has complex multiplication by the product of the fields $K_d$ where the product is taken over all $d$ such that if $x_n=3$.
\end{proof}

By Lemma \ref{lem_CM}, the abelian variety $J$ has potentially good reduction everywhere. 
This means that there exists an abelian scheme over some finite extension of $\ZZ_p$ such that its generic fiber is $J$.
For $p \nmid m$, since $C$ already has good reduction at $p$, so does $J$; no extension of $\ZZ_p$ is needed, the Jacobian $\CJ$ is a smooth integral model of $J$ defined over $\ZZ_p$.
The $\QM$-action on $J$ extends naturally to a $\QM$-action on $\CJ$.  

Let $\CJ[p^\infty]$ be the associated $p$-divisible group scheme of $\CJ$. The $\QM$-action on $\CJ$ induces a $(\QM\otimes_\QQ \QQ_p$)-action on $\CJ[p^\infty]$ and thus a canonical decomposition
\[\CJ[p^\infty]=\oplus_{\co \in \CO'} \CJ[\p_\co^\infty],\]
where $\CO'$ is the subset of $\CO$ with $d_\co \nmid a(i)$ for any $i=1,2,3$ and each $p$-divisible group $\CJ[\p_\co^\infty]$ has height $\# \co$. 

To state the theorem by Shimura--Taniyama and Tate on $\nu(\CJ[\p_\co^\infty])$, we introduce the following notation. 
Recall that $\frf(\tau)=\dim_{\CC} V^+_\tau$. 
By the proof of Lemma \ref{lem_CM}, $\frf(\tau)\in \{0,1\}$ and $\frf(\tau)+\frf(\tau^*)=1$ for all $\tau \in \CT$ such that the order of $\tau$ does not divide $a(i)$ for any $i=1,2,3$. For $\epsilon \in \{0,1\}$, define
\begin{equation} \label{EdefS}
S_\epsilon=\{\tau \in \CT \mid \text{the order of }\tau \text{ does not divide }a(1),a(2), a(3), \text{ and }\frf(\tau)=\epsilon\}.
\end{equation}
For $\co \in \CO'$, set $\alpha_\co=\#( \co\cap S_1)$ and $\beta_\co=\#(\co\cap S_0)$. Note that $\alpha_\co+\beta_\co=\# \co$.

\begin{theorem} \label{thm_ST} (Shimura--Taniyama formula \cite[Section 5]{Tate})
The only slope of the Newton polygon $\nu(\CJ[\p_\co^\infty])$ is $\alpha_\co/\# \co$.
\end{theorem}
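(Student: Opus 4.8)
The plan is to derive the Newton polygon of $\CJ[\p_\co^\infty]$ from the Shimura--Taniyama--Tate theory of the reduction of CM abelian varieties, as formulated by Tate. First I would record the setup: by Lemma \ref{lem_CM}, $J$ has CM by $E = \prod_d K_d$, and for $\co \in \CO'$ the factor acting on the relevant piece is $K_{d_\co}$, with the $p$-divisible group $\CJ[\p_\co^\infty]$ being the $\p_\co$-component of the CM $p$-divisible group, of height $\#\co = [K_{d_\co,\p_\co}:\QQ_p]$. Since $p \nmid m$, the field $K_{d_\co}$ is unramified at $p$, so $K_{d_\co,\p_\co}$ is the unramified extension of $\QQ_p$ of degree $\#\co$, and $\CJ[\p_\co^\infty]$ is an isoclinic $p$-divisible group (a power of $G_{c,d}$ for a single slope $d/(c+d)$), because any $p$-divisible group with an action of a field acting with residue degree equal to its height must be isoclinic.

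The core step is to identify that single slope. Here I would invoke the Shimura--Taniyama formula in Tate's form (\cite[Section 5]{Tate}): for a CM abelian variety with good reduction, the slope of the $\p$-isotypic part of the $p$-divisible group is the quotient of the number of CM types $\tau$ "lying over $\p$" that belong to the chosen half $\Phi$ of the CM type, divided by the total number of embeddings over $\p$. Concretely, the CM type $\Phi$ of $J$ is exactly the set $\{\tau \in \CT : \cf(\tau) = 1\}$ (the multiplicity of $\tau$ in $H^0(C(\CC),\Omega^1_C)$), intersected with the embeddings of relevant order; the embeddings of $K_{d_\co}$ that induce the prime $\p_\co$ are precisely the elements of the $\sigma$-orbit $\co$. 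Therefore the Shimura--Taniyama slope for $\CJ[\p_\co^\infty]$ is
\[
\frac{\#(\co \cap \Phi)}{\#\co} = \frac{\#(\co \cap S_1)}{\#\co} = \frac{\alpha_\co}{\#\co},
\]
using the definition \eqref{EdefS} of $S_1$ and the fact, noted after Lemma \ref{lem_CM}, that $\cf(\tau) \in \{0,1\}$ and $\cf(\tau) + \cf(\tau^*) = 1$ on the relevant embeddings. Combined with isoclinicity from the previous paragraph, this shows $\alpha_\co/\#\co$ is the only slope of $\nu(\CJ[\p_\co^\infty])$.

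I would then close by checking the compatibility of the various decompositions: the decomposition $\CJ[p^\infty] = \oplus_{\co \in \CO'} \CJ[\p_\co^\infty]$ coming from the $(\QM \otimes \QQ_p)$-action matches the decomposition of $V \otimes \QQ_p$ into $\sigma$-orbit pieces via the Betti--crystalline (or Betti--de Rham, then de Rham--crystalline) comparison, so that the Hodge-theoretic computation of $\cf(\tau)$ in \eqref{DMeqn} feeds correctly into Tate's formula. The main obstacle — really the only substantive point — is making the dictionary between Tate's abstract formulation (phrased in terms of a CM field, a prime above $p$, and counting CM-type embeddings inducing that prime) and our concrete combinatorial data $(\co, S_1, \alpha_\co)$ completely precise, in particular verifying that "embeddings $\tau: K_{d_\co} \hookrightarrow \QQ_p^{\mathrm{alg}}$ inducing the prime $\p_\co$" corresponds exactly to the $\sigma$-orbit $\co \subseteq \CT$ under the identification fixed in Section \ref{prelim_gpalg}; once that is in place, the slope count is immediate and the isoclinicity is formal.
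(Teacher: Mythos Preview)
Your proposal is correct and follows essentially the same route as the paper: both identify $\CJ[\p_\co^\infty]$ as a $p$-divisible group with CM by the local field $K_{d_\co,\p_\co}$ and CM-type $\co\cap S_1$, observe that such a group is isoclinic, and read off the slope as $\#(\co\cap S_1)/\#\co = \alpha_\co/\#\co$. The only difference is one of presentation: the paper rederives the local statement directly (isoclinicity from the field action of degree equal to the height, then slope $=\dim/\text{height}=\#\Phi/[K:\QQ_p]$ via the Lie algebra decomposition), whereas you invoke the global Shimura--Taniyama formula from \cite{Tate} as a black box and then unwind the dictionary between embeddings over $\p_\co$ and the $\sigma$-orbit $\co$.
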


\begin{proof}
For completeness, we briefly sketch Tate's local proof as in \cite[Section 5]{Tate}. First, we recall the notion of a 
$p$-divisible group with complex multiplication. 

Let $G$ be a $p$-divisible group defined over the ring of integers ${\mathcal O}_L$ of
a finite extension $L$ of $\Q_p$ such that $L\subset \Q_p^{\rm alg}$. 
We say that  $G$ over ${\mathcal O}_L$ has complex multiplication by a local field $K$, for $K$ a finite extension of $\Q_p$, 
if $G$ has height $[K:\Q_p]$ and is equipped with a $\Q_p$-linear action of $K$ defined over ${\mathcal O}_L$ such that, for each $\tau\in H:={\rm Hom}_{\Q_p}(K, \Q_p^{\rm alg})$, 
$$\cf(\tau):=\dim_{\Q_p^{\rm alg}} ({\rm Lie}(G)\otimes_{{\mathcal O}_L} \Q_p^{\rm alg})_\tau\in\{0,1\}.$$ 
For $\Phi:=\{\tau\in  H \mid \cf(\tau)=1\}$,
the pair $(K,\Phi)$ is called the CM-type of $G$. 

Let $k_L$ denote the residue field of $L$ and set $G_0:=G\times_{{\mathcal O}_L} k_L$, the reduction of $G$ over $k_L$. We observe that, by definition,  if $G$ over ${\mathcal O}_L$ has CM-type $(K,\Phi)$, then $G_0$ is isoclinic  (i.e., the Newton polygon of $G_0$ has only one slope) of slope $\#\Phi/[K:\Q_p]$. 

Indeed,  the existence of a $\Q_p$-linear embedding of $K$ into ${\rm End}(G)\otimes \QQ$, with $[K:\Q_p]={\rm height}(G)$, implies that $G_0$ is isoclinic.
Also, by the definition of CM-type, 
the dimension of $G$
is equal to $\#\Phi$, because
$$\dim (G)=  {\rm rk}_{{\mathcal O}_L}  {\rm Lie}(G)=
 \sum_{\tau\in H}\cf(\tau)=\#\Phi.$$  We deduce that the slope of $G_0$ is  $\frac{\dim (G)}{{\rm height }(G)}=\frac{\#\Phi}{[K:\Q_p]}$.

To conclude, it suffices to observe that for $p \nmid m$, Lemma \ref{lem_CM} implies that, for each $\co\in\CO'$, the $p$-divisible group $\CJ[\p_\co]$, after passing to a finite extension of $\ZZ_p$,\footnote{The Newton polygon is independent of the definition field of $\CJ$ and we pass to a finite extension of $\ZZ_p$ such that the CM-action is defined over this larger local ring so that we can apply Tate's theory.} has complex multiplication by $K_{d_\co,\p_\co}$ with CM-type $(K_{d_\co,\p_\co},\co\cap S_1)$.
\end{proof}

\begin{corollary}\label{ST_ss}
Assume that all orbits $\co\in \CO'$ are self-dual, i.e., $\co=\co^*$. Then $\CJ$ has supersingular reduction at $p$.
\end{corollary}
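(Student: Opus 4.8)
The plan is to deduce this directly from Theorem \ref{thm_ST} by showing that every $p$-divisible group $\CJ[\p_\co^\infty]$ appearing in the decomposition $\CJ[p^\infty]=\oplus_{\co\in\CO'}\CJ[\p_\co^\infty]$ is isoclinic of slope exactly $1/2$; an abelian variety all of whose isogeny factors have $p$-divisible group of slope $1/2$ is by definition supersingular. By Theorem \ref{thm_ST}, the unique slope of $\nu(\CJ[\p_\co^\infty])$ is $\alpha_\co/\#\co$, where $\alpha_\co=\#(\co\cap S_1)$, so it suffices to prove that $\alpha_\co=\beta_\co$ for every self-dual orbit $\co\in\CO'$, since $\alpha_\co+\beta_\co=\#\co$.

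The key observation is the duality relation already recorded in the excerpt: for any $\tau\in\CT$ whose order does not divide any $a(i)$, one has $\frf(\tau)+\frf(\tau^*)=1$ (this comes from $\dim_\CC V^+_{\tau^*}=\dim_\CC V^-_\tau$ together with $\frf(\tau)\in\{0,1\}$, as in Remark \ref{rmk_dimV} and the proof of Lemma \ref{lem_CM}). Hence $\tau\in S_1$ if and only if $\tau^*\in S_0$, so the involution $*$ on $\CT$ restricts to a bijection $S_1\to S_0$. Now first I would note that $*$ commutes with the Frobenius action in the appropriate sense: since $\tau_n^*=\tau_{-n}$ and $\tau_n^\sigma=\tau_{pn}$, we have $(\tau^\sigma)^*=(\tau^*)^\sigma$, so $*$ permutes the set $\CO$ of $\sigma$-orbits, sending $\co$ to $\co^*$. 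Therefore the bijection $*\colon S_1\to S_0$ carries $\co\cap S_1$ bijectively onto $\co^*\cap S_0$. When $\co$ is self-dual, $\co^*=\co$, and we obtain a bijection $\co\cap S_1\xrightarrow{\ \sim\ }\co\cap S_0$, giving $\alpha_\co=\beta_\co$ as desired. Consequently the unique slope of $\nu(\CJ[\p_\co^\infty])$ is $\alpha_\co/\#\co=1/2$ for each $\co\in\CO'$.

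To finish, I would amalgamate: by the decomposition of $\CJ[p^\infty]$ and the compatibility of Newton polygons with direct sums (the amalgamate sum, as in the final paragraph of Section \ref{prelim_NP}), $\nu(\CJ)$ has every slope equal to $1/2$, i.e.\ $\CJ$ has supersingular reduction at $p$. One should also remark, as in the footnote to the proof of Theorem \ref{thm_ST}, that the Newton polygon is independent of the base field, so passing to a finite extension of $\ZZ_p$ to realize the CM-action causes no difficulty.

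I do not anticipate a serious obstacle here: the statement is essentially a bookkeeping consequence of Theorem \ref{thm_ST} together with the Shimura--Taniyama duality $\frf(\tau)+\frf(\tau^*)=1$. The only point requiring a little care is checking that the involution $*$ and the Frobenius $\sigma$ interact correctly on $\CT$ — i.e.\ that $*$ genuinely permutes $\sigma$-orbits and matches $S_1$-parts of $\co$ with $S_0$-parts of $\co^*$ — but this is immediate from the explicit formulas $\tau_n^*=\tau_{-n}$ and $\tau_n^\sigma=\tau_{pn}$ recorded in Section \ref{prelim_gpalg}.
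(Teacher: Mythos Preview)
Your proposal is correct and follows essentially the same approach as the paper's own proof: use the duality $\frf(\tau)+\frf(\tau^*)=1$ to conclude that for a self-dual orbit $\co$ the involution $*$ gives a bijection $\co\cap S_1\to\co\cap S_0$, hence $\alpha_\co=\beta_\co$, so Theorem \ref{thm_ST} gives slope $1/2$ for every $\CJ[\p_\co^\infty]$. The paper's proof is just a terser version of exactly this argument.
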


\begin{proof}
For each $\co\in\CO'$, if $\tau\in \co$, then $\tau^*\in \co$. Hence $\alpha_\co=\beta_\co$, and the only slope of the Newton polygon $\nu(\CJ[\p_\co^\infty])$ is $\alpha_\co/(\alpha_\co+\beta_\co) = 1/2$. 
\end{proof}

\begin{remark} \label{pinert}
Let $K_m^+$ be the maximal totally real subfield of $\Km$. If each (or, equivalently, one) prime of $K_m^+$ above $p$ is inert in $\Km/K_m^+$, then all $\sigma$-orbits $\co\in\CO'$ are self-dual. E.g., if $p\equiv -1\bmod m$, then  for all $n\in(\ZZ/m\ZZ)^*$, the associated orbit is $\co_n=n\langle p\rangle=\{n,-n\}=\co_{-n}=\co^*_n$.
\end{remark}

\begin{example} \label{example_ss}
Let $g \geq 1$, $m=2g+1$ and $a = (1,1,m-2)$. 
The equation $y^m=x(x-1)$ 
defines a smooth projective curve $\cC$ over $\ZZ[1/m]$ with geometrically irreducible fibers. 
It has genus $g$ by \eqref{Egenus}.
Its Jacobian $\CJ$ over $\ZZ[1/m]$ has complex multiplication by $\prod_{1<d\mid m}\Kd$. 
Suppose that $p\nmid m$ and the order of $p\in (\ZZ/m\ZZ)^*$ is even 
(i.e., the inertia degree of $(\Km)_{\p}$ is even for any $\p$ over $p$).
When $m$ is prime, this implies that all primes of $K_m^+$ above $p$ are inert in $K_m/K_m^+$.
Then $\CJ$ has supersingular reduction at $p$ by Corollary \ref{ST_ss}. 
When $p\nmid 2m$, this is \cite[Theorem 1]{honda}. See also \cite[Lemma 1.1]{GR}.
\end{example}

\begin{remark}
The Newton polygon of the Fermat curve $F_m: x^m+y^m=z^m$ is studied in \cite{yui}, with the connection 
to Jacobi sums going back to \cite{Weil}.
Fix $k$ with $2 \leq k \leq m-1$ and consider the inertia type $a=(1, k-1, m-k)$.
Then the $\mu_m$-Galois cover $\cC_{m,a} \to \PP$ with inertia type $a$ is a quotient of the Fermat curve.  
In certain cases, this is sufficient to determine the Newton polygon of $\cC_{m,a}$.
Let $f$ be the order of $p$ modulo $m$.
If $f$ is even and $p^{f/2} \equiv -1 \bmod m$, then $F_m$ is 
supersingular, so $\cC_{m,a}$ is supersingular as well.
If $f$ is odd, then the slope $1/2$ does not occur in the Newton polygon of $F_m$ or $\cC_{m,a}$.
Information about the $p$-rank of $F_m$ and $\cC_{m,a}$ can be found in \cite{gonzalez}.
\end{remark}

\subsection{Slopes with large denominators} \mbox{ } \\

In this section, we use Theorem \ref{thm_ST} to construct curves of genus $g \geq 1$ whose 
Newton polygon contains only slopes with large denominators. 

Consider a monodromy datum $(m,a)$ with $m=2g+1$ and $a=(a(1),a(2),a(3))$. 

In this section, we assume that $d \nmid a(i)$ for any $1<d|m$.

For convenience, via the identification $\CT=\ZZ/m\ZZ$, we identify the sets $S_0$ and $S_1$ from \eqref{EdefS} as subsets of $\ZZ/m\ZZ$.

If $p$ is a rational prime, not dividing $m$, 
we write $\langle p\rangle\subset (\mathbb{Z}/m\ZZ)^*$ for the cyclic subgroup generated by the congruence class of $p\bmod m$. Then $\langle p \rangle$ acts naturally on $\mathbb{Z}/m\ZZ$. Let $n\langle p \rangle$ denote the $\langle p \rangle$-orbits in $\mathbb{Z}/m\ZZ$ where $n\not\equiv 0$.


The following proposition is a special case of Theorem \ref{thm_ST}.

\begin{proposition}\label{prop_ST}
Assume $p \nmid m$.
Then the slopes of the Newton polygon $\nu(\CJ)$ at $p$ are naturally indexed by the cosets of 
$\hp$ in $(\mathbb{Z}/d\ZZ)^*$ for all $1<d\mid m$.
For each orbit $n\hp$, 
the associated slope is \[\lambda_{n\hp}:=\frac{\# n\hp \cap S_1}{\#n\hp}.\]
\end{proposition}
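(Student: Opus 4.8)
The plan is to deduce Proposition \ref{prop_ST} directly from Theorem \ref{thm_ST} by unwinding the bookkeeping that relates $\sigma$-orbits on $\CT$ to $\langle p\rangle$-cosets. First I would recall that, under the standing assumption that $d\nmid a(i)$ for every $1<d\mid m$, the set $\CO'$ is all of $\CO$, so by the decomposition $\CJ[p^\infty]=\oplus_{\co\in\CO}\CJ[\p_\co^\infty]$ and the definition of the amalgamate sum, $\nu(\CJ)$ is the amalgamate sum over $\co\in\CO$ of the Newton polygons $\nu(\CJ[\p_\co^\infty])$. By Theorem \ref{thm_ST} the latter has the single slope $\alpha_\co/\#\co = \#(\co\cap S_1)/\#\co$, contributing to $\nu(\CJ)$ with multiplicity $\#\co$ (the height of $\CJ[\p_\co^\infty]$). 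So the content of the proposition is purely the combinatorial identification of the index set $\CO$ with the stated set of cosets.

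Next I would make that identification explicit. Under the fixed identification $\CT=\ZZ/m\ZZ$, the Frobenius acts by $\tau_n\mapsto\tau_{pn}$, so the $\sigma$-orbits in $\CT$ are exactly the $\langle p\rangle$-orbits $n\langle p\rangle$ on $\ZZ/m\ZZ$ with $n\not\equiv 0$ (the element $n\equiv 0$ corresponds to $\tau_0$, which does not appear because $\cf(\tau_0)=0$ and $0$ is not in the relevant index set). Now partition $\CO$ according to the order $d$ of the orbit: an orbit $n\langle p\rangle$ consists of elements of exact order $d$ in $\ZZ/m\ZZ$ for a unique $1<d\mid m$, and via multiplication by $m/d$ the elements of exact order $d$ in $\ZZ/m\ZZ$ correspond bijectively and $\langle p\rangle$-equivariantly to the elements of $(\ZZ/d\ZZ)^*$, where $\langle p\rangle$ acts through its image $\langle p\bmod d\rangle$. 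Hence the orbits of order $d$ are in natural bijection with the cosets of $\langle p\rangle$ in $(\ZZ/d\ZZ)^*$, and running over all $1<d\mid m$ gives the index set claimed in the statement. Under this bijection $\#\co = \# n\langle p\rangle$ and $\co\cap S_1$ matches $n\langle p\rangle\cap S_1$, so the slope $\alpha_\co/\#\co$ is exactly $\lambda_{n\langle p\rangle} = \#(n\langle p\rangle\cap S_1)/\# n\langle p\rangle$. Assembling these contributions via the amalgamate sum yields the proposition.

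I do not expect a serious obstacle here, since the proposition is labeled as "a special case of Theorem \ref{thm_ST}"; the only thing requiring care is the clean statement of the order-$d$ stratification of $\CT$ and the compatibility of the $\langle p\rangle$-action with the isomorphism between elements of order $d$ in $\ZZ/m\ZZ$ and $(\ZZ/d\ZZ)^*$. One should also note explicitly that the multiplicity with which a slope $\lambda_{n\langle p\rangle}$ appears in $\nu(\CJ)$ is $\#n\langle p\rangle$, matching the height of $\CJ[\p_\co^\infty]$, so that the endpoints of $\nu(\CJ)$ are $(0,0)$ and $(2g,g)$ with $2g = m-1 = \sum_{\co}\#\co$; this is where the hypothesis $m=2g+1$ together with $d\nmid a(i)$ enters, guaranteeing $\CO'=\CO$ and $\dim J = g$. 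The mildly delicate point is therefore simply keeping the indexing consistent, not any real mathematical difficulty.
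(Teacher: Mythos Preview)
Your proposal is correct and follows exactly the route the paper takes: the paper's entire proof is the sentence ``The following proposition is a special case of Theorem \ref{thm_ST}'', and you have simply filled in the bookkeeping that this sentence leaves to the reader. One minor point of phrasing: in your first paragraph you say $\CO'=\CO$, but strictly speaking the orbit $\{0\}$ lies in $\CO$ and not in $\CO'$ (since $d_{\{0\}}=1$ divides every $a(i)$); you handle this correctly in your second paragraph by excluding $n\equiv 0$, so the argument is fine, but you may want to reconcile the two statements.
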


Note that when the inertia type $a$ is fixed, the Newton polygon $\nu(\CJ)$ at a prime $p$ depends only on the associated subgroup $\hp\subset (\mathbb{Z}/m\ZZ)^*$. 
In particular, if $m=2g+1$ is prime, then $\nu(\CJ)$ at $p$ depends only on the order of $p$ in $ (\mathbb{Z}/m\ZZ)^*$. See also \cite[Theorem~2]{honda} for the case when $a=(1,1,m-2)$.

\begin{corollary}   \label{Clargeden}
Assume that $m=2g+1$ is prime and let $f$ be a prime divisor of $g$.
If $p$ is a prime with $p \nmid m$ such that the reduction of $p$ has order $f$ in $ (\mathbb{Z}/m\ZZ)^*$,
then 
every slope $\lambda$ of the Newton polygon $\nu(\CJ)$ at $p$ with $\lambda\neq 0,1$ has denominator $f$.  
In particular, if the $p$-rank of $\CJ$ is $0$, then every slope has denominator $f$.
\end{corollary}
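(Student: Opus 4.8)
The plan is to derive the statement directly from Proposition \ref{prop_ST}. By that proposition, the slopes of $\nu(\CJ)$ at $p$ are indexed by the cosets $n\hp$ in $(\ZZ/d\ZZ)^*$ as $d$ runs over divisors $1 < d \mid m$, and since $m = 2g+1$ is prime, the only such $d$ is $d = m$ itself. So every slope has the form $\lambda_{n\hp} = \#(n\hp \cap S_1)/\#(n\hp)$ for some nonzero $n \in \ZZ/m\ZZ$, and the denominator of $\lambda_{n\hp}$ (in lowest terms) divides $\#(n\hp)$.

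First I would observe that, because $m$ is prime, every nonzero residue $n$ is a unit, so the orbit $n\hp$ is exactly a coset of the subgroup $\hp \subset (\ZZ/m\ZZ)^*$; hence $\#(n\hp) = \#\hp = $ the order of $p$ in $(\ZZ/m\ZZ)^*$, which is $f$ by hypothesis. Thus $\lambda_{n\hp} = \#(n\hp \cap S_1)/f$, and its reduced denominator divides $f$. Since $f$ is prime, the reduced denominator is either $1$ or $f$; it is $1$ precisely when $\#(n\hp \cap S_1) \in \{0, f\}$, i.e. when $\lambda_{n\hp} \in \{0,1\}$. Therefore every slope $\lambda \neq 0,1$ has reduced denominator exactly $f$, which is the first assertion.

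For the final sentence, I would note that the $p$-rank of $\CJ$ equals the multiplicity of the slope $0$ in $\nu(\CJ)$; by the symmetry of the Newton polygon (the slope $0$ and slope $1$ parts have equal multiplicity, as recorded in Section \ref{prelim_NP}), the $p$-rank being $0$ forces the slope $1$ to be absent as well. Hence if the $p$-rank is $0$, no slope equals $0$ or $1$, so by the preceding paragraph every slope has denominator $f$.

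I do not anticipate a serious obstacle here: the corollary is essentially a bookkeeping consequence of Proposition \ref{prop_ST} together with the primality of both $m$ and $f$. The only point requiring a little care is the reduction step "denominator divides $f$ and $f$ prime, so the denominator is $1$ or $f$, and it is $1$ iff the slope is $0$ or $1$" — but this is immediate once one writes $\lambda_{n\hp}$ as $k/f$ with $0 \le k \le f$ and uses that $f$ is prime. The symmetry argument for the $p$-rank $0$ case is equally routine.
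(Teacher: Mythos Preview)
Your proof is correct and follows essentially the same approach as the paper: invoke Proposition~\ref{prop_ST}, use that $m$ prime forces every orbit to have size $f$, and then use primality of $f$ to conclude the denominator is $f$ whenever the slope is not $0$ or $1$. Your treatment is in fact more explicit than the paper's, which compresses everything into two sentences and handles the $p$-rank~$0$ case ``by definition'' without spelling out the symmetry argument you give.
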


For any $m$, $g$ and $f$ as above, there are infinitely many primes $p$ satisfying the hypotheses of Corollary \ref{Clargeden} by the Chebotarev density theorem.
\begin{proof}
Under the hypotheses, if $\lambda_{n\hp}\neq 0,1$, then $f$ is the denominator of the fraction $\lambda_{n\hp}$ by Proposition \ref{prop_ST}. When the $p$-rank is $0$, then there is no slope $0$ or $1$ by definition.
\end{proof}

\begin{example} \label{Enotsg7}
Let $g=14$ and $m=29$. 
For $p \equiv 7,16,20,23,24,25 \bmod 29$, the inertia degree is $f=7$. 
For each choice of the inertia type, the Newton polygon is $(2/7, 5/7)\oplus (3/7, 4/7)$.
\end{example}

A prime number $\ell$ is a \emph{Sophie Germain prime} if $2\ell+1$ is also prime. The rest of the section focuses on the case when $g$ is a Sophie Germain prime.

\begin{corollary} 
Suppose $g$ is an odd Sophie Germain prime. Let $p$ be a prime, $p \neq 2g+1$.

Then, one of the following occurs.
\begin{enumerate}
\item 
If $p \equiv 1 \bmod 2g+1$, then $\nu(\CJ)=ord^g$.
\item 
If $p \equiv -1 \bmod 2g+1$, then $\nu(\CJ)=ss^g$.
\item If $p$ has order $g$ modulo $2g+1$, then $\nu(\CJ)=(\alpha/g,(g-\alpha)/g)$, 
for $\alpha=\# \hp \cap S_1$.
\item If $p$ has order $2g$ modulo $2g+1$, then $\nu(\CJ)=ss^g$.
\end{enumerate}
\end{corollary}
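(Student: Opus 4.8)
The plan is to read off all four cases from Proposition \ref{prop_ST} and Corollary \ref{ST_ss}, using crucially that $m=2g+1$ is prime (so the standing hypothesis ``$d\nmid a(i)$ for all $1<d\mid m$'' is automatic, and $\dim J=g(m,a)=(m-1)/2=g$). First I would set up the combinatorics. Since $m$ is prime, $(\ZZ/m\ZZ)^*$ is cyclic of order $2g$, and because $g$ is an odd prime its divisors are exactly $1,2,g,2g$; as $p\neq m$, the class of $p$ is a unit, so its order is one of these four, and since $-1$ is the unique element of order $2$, order $2$ is the same as $p\equiv -1\bmod m$. Hence the four listed cases are exhaustive and mutually exclusive. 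Also, $m$ prime forces every $\tau\in\CT\setminus\st{0}$ to have order $m$, so $\CO'$ is precisely the set of $\langle p\rangle$-orbits on $(\ZZ/m\ZZ)^*$, each a coset of $\langle p\rangle$; and from the discussion preceding Theorem \ref{thm_ST}, $\cf(\tau)\in\st{0,1}$ and $\cf(\tau)+\cf(\tau^*)=1$ for such $\tau$. Since there are $2g$ of them and $\sum_\tau\cf(\tau)=\dim_{\CC}V^+=g$, we get $\#S_1=\#S_0=g$ and $S_0=S_1^*=-S_1$. I will also use that in the amalgamate sum $\nu(\CJ)=\sum_{\co\in\CO'}\nu(\CJ[\p_\co^\infty])$ the orbit $\co$ contributes its single slope $\#(\co\cap S_1)/\#\co$ with multiplicity $\#\co$, so the multiplicities sum to $2g=2\dim J$.

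Next I would dispatch the two even-order cases together. If $p$ has order $2$ or $2g$, then $-1\in\langle p\rangle$: for order $2$ this is clear, and for order $2g$ the element $p^g$ has order $2$, hence $p^g\equiv -1\bmod m$. Then every orbit $\co=n\langle p\rangle$ satisfies $-\co=\co$, i.e.\ is self-dual, so Corollary \ref{ST_ss} shows that $\CJ$ has supersingular reduction at $p$; since $\dim J=g$ this is exactly $\nu(\CJ)=ss^g$, which proves (2) and (4).

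For the odd-order cases I would compute the slopes orbit by orbit. If $p\equiv 1\bmod m$, then $\langle p\rangle=\st{1}$, the orbits are the singletons $\st{n}$ for $n\in(\ZZ/m\ZZ)^*$, and by Proposition \ref{prop_ST} the slope of $\st{n}$ is $1$ if $n\in S_1$ and $0$ if $n\in S_0$; since $\#S_1=\#S_0=g$, the Newton polygon has slopes $0$ and $1$ each with multiplicity $g$, i.e.\ $\nu(\CJ)=ord^g$, proving (1). If $p$ has order $g$, then $\langle p\rangle$ has index $2$ in $(\ZZ/m\ZZ)^*$, so there are exactly two orbits, $\co_1=\langle p\rangle$ and $\co_2=(\ZZ/m\ZZ)^*\setminus\langle p\rangle$, each of size $g$. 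Since $\#\langle p\rangle=g$ is odd, $-1\notin\langle p\rangle$, so $\co_2=-\co_1=\co_1^*$, and $S_0=-S_1$ gives $\#(\co_2\cap S_1)=\#(-\co_1\cap S_1)=\#(\co_1\cap S_0)=g-\alpha$ where $\alpha:=\#(\langle p\rangle\cap S_1)$. Hence the two slopes are $\alpha/g$ and $(g-\alpha)/g$, each with multiplicity $g$, so $\nu(\CJ)=(\alpha/g,(g-\alpha)/g)$ (here $0\le\alpha\le g$; since $g$ is prime this is the stated polygon when $0<\alpha<g$, degenerating to $ord^g$ in the extreme cases), proving (3).

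I do not expect a serious obstacle: the real input is Proposition \ref{prop_ST} and Corollary \ref{ST_ss}, and the rest is bookkeeping of $\langle p\rangle$-orbits on $(\ZZ/m\ZZ)^*$ and the duality $S_0=-S_1$. The one point that needs care is the translation from ``slopes indexed by the cosets of $\langle p\rangle$'' to the Newton polygon as a multiset: each coset's slope must be weighted by the cardinality of the coset (equivalently, by the height of $\CJ[\p_\co^\infty]$), which is what makes the total multiplicity $2g$ and, in cases (2) and (4), collapses the many slope-$1/2$ pieces into $ss^g$.
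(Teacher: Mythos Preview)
Your proof is correct and follows exactly the same approach as the paper: the cases are exhausted by noting the order of $p$ modulo $m=2g+1$ lies in $\{1,2,g,2g\}$, Corollary \ref{ST_ss} handles the even-order cases via self-duality of the orbits, and Proposition \ref{prop_ST} handles the odd-order cases by direct slope computation. The paper's own proof is a terse two-line citation of these results, whereas you have carefully unpacked the orbit bookkeeping and the duality $S_0=-S_1$.
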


\begin{proof}
Let $m=2g+1$.
Under the Sophie Germaine assumption on $g$, a prime $p\neq m$ has order either $1$, $2$, $g$, or $2g$ modulo $m$. 
Cases (2) and (4) follow from Corollary \ref{ST_ss} and Cases (1) and (3) follow from Proposition \ref{prop_ST}.
\end{proof}

Note that $p$ has order $g$ modulo $2g+1$ if and only if $p$ is a quadratic residue other than $1$ 
modulo $2g+1$. In this case, if $a=(1,1,2g-1)$, then $S_1=\{1, \ldots, g\}$ and 
$\alpha$ is the number of quadratic residues modulo $2g+1$ in $S_1$. 

\begin{example} \label{Esgg5}
 Let $g=5$ and $m=11$.  Let $p \equiv 3,4,5,9 \bmod 11$.
If $a=(1,1,9)$, then $\nu(\CJ)=(1/5,4/5)$.
If $a=(1,2, 8)$, then $\nu(\CJ)=(2/5,3/5)$. 
\end{example}

\begin{example} \label{Esgg11}
Let $g=11$ and $m=23$.  
Let $p \equiv  2, 3, 4, 6, 8, 9, 12, 13, 16, 18 \bmod 23$.
If $a=(1,1,21)$, then $\nu(\CJ)=(4/11,7/11)$.
If $a=(1,4,18)$, then $\nu(\CJ)=(1/11,10/11)$.
\end{example}

In Examples \ref{Esgg5} - \ref{Esgg11}, the listed Newton polygons are the only ones that 
can occur, under these conditions on $p$, as $a$ varies among all possible inertia types for $\mu_m$-Galois covers
of the projective line branched at three points. 
We did not find other examples of curves whose Newton polygon has slopes $1/d$ and $(d-1)/d$ using this method.

\begin{example} \label{Esgg1013}
	Let $g=1013$ and $m=2027$.  
	Suppose the congruence class of $p$ modulo $2027$ is contained in $\langle 3 \rangle$ in $(\ZZ/2027\ZZ)^*$.
	If the inertia type is $a=(1,1,2025)$, then $\nu(\CJ)=(523/1013,490/1013)$.
\end{example}

\section{Tables}\label{sec_table}
The following tables contain all the Newton polygons which occur for cyclic degree $m$ covers of the projective line branched at 3 points
when $3 \leq m \leq 12$.
Each inertia type $a=(a_1, a_2, a_3)$ is included, up to permutation and the action of $(\ZZ/m)^*$.

The signature is computed by \eqref{DMeqn} and written as $(f(1), \ldots, f(m-1))$. We denote by $ord$ the Newton polygon of $G_{0,1} \oplus G_{1,0}$ 
which has slopes $0$ and $1$ with multiplicity $1$
and by $ss$ the Newton polygon of $G_{1,1}$ which has slope $1/2$ with multiplicity $2$.

\begin{center}
	$m=3$\\
	\begin{tabular}{  |c|c|c|c|  }
		\hline
		& p & $ 1 \bmod 3$ & $2 \bmod 3$ \\
		\hline
		& {prime orbits} & split & $(1,2)$  \\
		\hline
		a & signature & &\\
		\hline
		$(1,1,1)$  &  $(1,0)$ &  $ord$ &  $ss$  \\
		\hline
	\end{tabular}
\end{center}

\bigskip

\begin{center}
	$m=4$\\
	\begin{tabular}{  |c|c|c|c|  }
		\hline
		& p & $ 1 \bmod 4$ & $3 \bmod 4$ \\
		\hline
		& {prime orbits} & split & $(1,3),(2)$  \\
		\hline
		a & signature & &\\
		\hline
		$(1,1,2)$  &  $(1,0,0)$ &  $ord$ &  $ss$  \\
		\hline
	\end{tabular}
\end{center}

\bigskip

\begin{center}
	$m=5$\\
	\begin{tabular}{  |c|c|c|c|c|  }
		\hline
		& p & $ 1 \bmod 5$ & $2,3 \bmod 5$ & $ 4 \bmod 5$\\
		\hline
		& {prime orbits} & split & $(1,2,3,4)$ & $(1,4)$, $(2,3)$  \\
		\hline
		a & signature & && \\
		\hline
		$(1,1,3)$  &  $(1,1,0,0)$ &  $ord^2$ &  $ss^2$  & $ss^2$\\
		\hline
	\end{tabular}
	
\end{center}

\bigskip

\begin{center}
	$m=6$\\
	\begin{tabular}{  |c|c|c|c|  }
		\hline
		& p & $1 \bmod 6$ & $ 5 \bmod 6$\\
		\hline
		& {prime orbits} & split & $(1,5),(2,4),(3)$ \\
		\hline
		a & signature & & \\
		\hline
		$(1,1,4)$  &  $(1,1,0,0,0)$ &  $ord^2$ & $ ss^2$  \\
		\hline
		$(1,2,3)$  &  $(1,0,0,0,0)$ &  $ord$ & $ ss$  \\
		\hline
	\end{tabular}
\end{center}

\bigskip

\begin{small}
	
	\begin{center}
		$m=7$\\
		\begin{tabular}{  |c|c|c|c|c|c|  }
			\hline
			& p & $1 \bmod 7$ & $2,4 \bmod 7$& $3,5 \bmod 7$ & $6 \bmod 7$\\
			\hline
			& {prime orbits} & split & $(1,2,4),(3,5,6)$ & $(1,2,3,4,5,6)$ & $(1,6),(2,5),(3,4)$  \\
			\hline
			a & signature & &&& \\
			\hline
			(1,1,5)  &  (1,1,1,0,0,0) &  $ord^3 $& (1/3,2/3)& $ss^3 $ & $ss^3$  \\
			\hline
			(1,2,4)  &  (1,1,0,1,0,0) &  $ord^3$ &  $ord^3$& $ss^3$ & $ss^3$  \\
			\hline
		\end{tabular}
	\end{center}
\end{small}

\bigskip

\begin{small}
	
	\begin{center}
		$m=8$\\
		\begin{tabular}{  |c|c|c|c|c|c|  }
			\hline
			& p & $1 \bmod 8$ & $ 3 \bmod 8$& $ 5\bmod 8$ & $ 7 \bmod 8$\\
			\hline
			&  &   & $(1,3),(2,6)$ & $(1,5),(3,7)$ & $(1,7),(2,6)$  \\
			&{prime orbits}& split & $(5,7),(4)$ & $(2),(4),(6)$ & $(3,5),(4)$  \\
			\hline
			a & signature & &&& \\
			\hline
			(1,1,6)  &  (1,1,1,0,0,0,0) &  $ord^3 $& $ord^2 \oplus ss$ & $ord \oplus ss^2$ & $ss^3$  \\
			\hline
			(1,2,5)  &  (1,1,0,0,1,0,0) &  $ord^3$ &  $ss^3$& $ord^3$ & $ss^3$  \\
			\hline
			(1,3,4)  &  (1,0,1,0,0,0,0) & $ ord^2$ & $ ord^2$& $ss^2$ & $ss^2$  \\
			\hline
		\end{tabular}
	\end{center}
\end{small}

\bigskip

\begin{center}
	
	\begin{small}
		$m=9$\\
		\begin{tabular}{  |c|c|c|c|c|c|  }
			\hline
			& p & $ 1 \bmod 9$ & $ 2,5 \bmod 9$& $4,7 \bmod 9$ & $8 \bmod 9$\\
			\hline
			&  &   & $(1,2,4,8,7,5)$ & $(1,4,7),(2,8,5)$ & $(1,8),(2,7)$  \\
			&{prime orbits}& split & $(3,6)$ & $(3),(6)$ & $(4,5),(3,6)$  \\
			\hline
			a & signature & &&& \\
			\hline
			(1,1,7)  &  (1,1,1,1,0,0,0,0) &  $ord^4$ & $ss^4$& $(1/3,2/3) \oplus ord$ & $ss^4$  \\
			\hline
			(1,2,6)  &  (1,1,0,0,1,0,0,0) &  $ord^3$ &  $ss^3$& $(1/3,2/3)$ & $ss^3$  \\
			\hline
			(1,3,5)  &  (1,1,0,1,0,0,0,0) &  $ord^3$ &  $ss^3$& $(1/3,2/3)$ & $ss^3 $ \\
			\hline

		\end{tabular}
	\end{small}
\end{center}

\bigskip

\begin{center}
	$m=10$\\
	\begin{tabular}{  |c|c|c|c|c|  }
		\hline
		& p & $ 1 \bmod 10$ & $3,7 \bmod 10$ & $ 9 \bmod 10$\\
		\hline
		& & & $(1,3,9,7)$ & $(1,9),(2,8)$   \\
		&{prime orbits} & split & $(2,6,8,4),(5)$ & $(3,7),(4,6),(5)$  \\
		\hline
		a & signature & && \\
		\hline
		(1,1,8)  &  (1,1,1,1,0,0,0,0,0) &  $ord^4$ &  $ss^4$  & $ss^4$\\
		\hline
		(1,2,7)  &  (1,1,1,0,0,1,0,0,0) &  $ord^4$ &  $ss^4$  & $ss^4$\\
		\hline
		(1,4,5)  &  (1,0,1,0,0,0,0,0,0) &  $ord^2$ &  $ss^2$  & $ss^2$\\
		\hline
	\end{tabular}
\end{center}

\bigskip

\begin{small}
	
	\begin{center}
		$m=11$\\
		\begin{tabular}{  |c|c|c|c|c|c|  }
			\hline
			& p & $ 1 \bmod 11$ & $2,6,7,8 \bmod 11$ & $ 3,4,5,9 \bmod 11$ & $ 10 \bmod 11$\\
			\hline
			& & &  & $(1,3,4,5,9)$ & $(1,10),(2,9)$   \\
			&{prime orbits} & split & inert & $(2,6,7,8,10)$ & $(3,8),(4,7),(5,6)$  \\
			\hline
			a & signature & &&& \\
			\hline
			(1,1,9)  &  (1,1,1,1,1,0,0,0,0,0) &  $ord^5$ &  $ss^5$  & $(1/5,4/5)$& $ss^5$\\
			\hline
			(1,2,8)  &  (1,1,1,0,0,1,1,0,0,0) &  $ord^5$ &  $ss^5$  & $(2/5,3/5)$& $ss^5$\\
			\hline
		\end{tabular}
	\end{center}
\end{small}

\bigskip

\begin{center}
	
	\begin{small}
		
		$m=12$\\
		\begin{tabular}{  |c|c|c|c|c|c|  }
			\hline
			& p & $1 \bmod 12$ & $5 \bmod 12$ & $ 7 \bmod 12$&  $ 11 \bmod 12$\\
			\hline
			& &   &  &$(1,7),(3,9)$ & $(1,11),(4,8)$  \\
			& & & $(1,5),(2,10),(3)$ & $(2),(4),(5,11)$ & $(3,9),(2,10)$  \\
			& {prime orbits}& split & $(4,8),(6),(7,11),(9)$ & $(6),(8),(10)$ & $(5,7),(6)$ \\
			\hline
			a & signature & &&& \\
			\hline
			(1,1,10)  &  (1,1,1,1,1,0,0,0,0,0,0) &  $ord^5$ &  $ord^3 \oplus ss^2$  & $ord^2 \oplus ss^3$ & $ss^5$\\
			\hline
			(1,2,9)  & (1,1,1,0,0,0,1,0,0,0,0) &  $ord^4$ &  $ord \oplus ss^3$  & $ord^3 \oplus ss$ & $ss^4$\\
			\hline
			(1,3,8)  &  (1,1,0,0,1,0,0,0,0,0,0) &  $ord^3$ &  $ord^2 \oplus ss$  & $ord \oplus ss^2$ & $ss^3$\\
			\hline
			(1,4,7)  &  (1,1,0,1,0,0,1,0,0,0,0) &  $ord^4$ &  $ss^4$  &$ord^4$ & $ss^4$\\
			\hline
			(1,5,6)  & (1,0,1,0,1,0,0,0,0,0,0) &  $ord^3$ &  $ord^3$  & $ss^3$ & $ss^3$\\
			\hline
		\end{tabular}
	\end{small}
\end{center}

\section{Applications} \label{Sapplication}

In the previous section, we computed the Newton polygons of cyclic degree $m$ covers of the projective line branched at $3$ points.
We carried out the calculation of the Newton polygon for all inertia types that arise when  $m \leq 23$.
Many of these were not previously known to occur for the Jacobian of a smooth curve.
We collect a list of the most interesting of these Newton polygons, restricting to the 
ones with $p$-rank $0$ and $4 \leq g \leq 11$.
In the third part of the section, we deduce some results for arbitrarily large genera $g$.

By \cite[Theorem 2.1]{VdGVdV}, if $p=2$ and $g \in \NN$, then there exists a supersingular curve of genus $g$ defined over 
$\overline{\FF}_2$ (or even over $\FF_2$).  For this reason, we restrict to the case that $p$ is odd in the following result.

\begin{theorem} \label{Tapp1} (Theorem \ref{Tintro1})
Let $p$ be odd.
There exists a smooth supersingular curve of genus $g$ defined over $\overline{\FF}_p$ in the following cases:
\begin{center}
\begin{tabular}{  |c|c|c|  }
\hline
		genus & congruence & where \\
\hline
4 & $p \equiv 2 \bmod 3$ & m=9, $a=(1,1,7)$\\
& $p \equiv 2,3,4 \bmod 5$ & $m=10$, $a=(1,1,8)$ \\
\hline
5 & $p \equiv 2,6,7,8,10 \bmod 11$ & $m=11$, any $a$\\
\hline 
6 & $p \not\equiv 1,3,9 \bmod 13$ & $m=13$, any $a$\\
& $p \equiv 3,5,6 \bmod 7$ & $m=14$, $a=(1,1,12)$\\
\hline
7 & $p \equiv 14 \bmod 15$ & $m=15$, $a=(1,1,13)$\\
& $p \equiv 15 \bmod 16$ & $m=16$, $a=(1,1,14)$\\
\hline
8  &  $p \not \equiv 1 \bmod 17$   & $m=17$, any $a$\\
		\hline	
9  & $p \equiv 2,3,8,10,12,13,14,15,18 \bmod 19$ & $m=19$, any $a$ \\
		\hline
10 & $p \equiv 5,17,20 \bmod 21$ & $m=21$, $a=(1,1,19)$\\
\hline
11 & $p \equiv 5,7,10,11,14,15,17,19,20,21,22 \bmod 23$& $m=23$, any $a$\\
\hline	
\end{tabular}
\end{center}
\end{theorem}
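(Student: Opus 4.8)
The plan is to verify, case by case, that each entry in the table produces a smooth curve over $\overline{\FF}_p$ whose Jacobian is supersingular, by applying the Shimura--Taniyama machinery developed in Section \ref{sec_dim0}. For a fixed entry, I would first record the monodromy datum $(m,a)$ and check that it satisfies conditions (1)--(3) of a monodromy datum and that $g(m,a)$, computed via \eqref{Egenus}, equals the claimed genus. For the entries labeled ``any $a$,'' the prime $m$ is odd and each local monodromy $a(i)$ is a unit mod $m$, so $\cf(\tau_n) \in \{0,1\}$ for all $n$ and $g = (m-1)/2$ independent of the choice of $a$; for the remaining entries I would exhibit the explicit $a$ given in the third column and confirm the genus.

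Next, for each entry I would translate the congruence condition on $p$ into a statement about $\sigma$-orbits in $\CT = \ZZ/m\ZZ$. The key point, via Corollary \ref{ST_ss}, is that $\CJ$ has supersingular reduction at $p$ as soon as every orbit $\co \in \CO'$ is self-dual, i.e. $\co = \co^*$. When $m$ is prime this is equivalent (Remark \ref{pinert}) to every prime of $K_m^+$ above $p$ being inert in $K_m/K_m^+$, which in turn holds precisely when $-1 \in \langle p \rangle \subseteq (\ZZ/m\ZZ)^*$, i.e. the order of $p$ mod $m$ is even and $p^{f/2} \equiv -1 \bmod m$ where $f$ is that order. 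So for the prime-$m$ entries ($m = 11, 13, 17, 19, 23$) I would check that each listed residue class lies in the (unique) index-$2$... no — more precisely, that each listed class generates a subgroup containing $-1$; equivalently that the listed classes are exactly the residues $n$ with $-1 \in \langle n \rangle$. This is a finite group-theoretic computation in each $(\ZZ/m\ZZ)^*$. For the composite-$m$ entries ($m = 9, 10, 14, 15, 16, 21$) one cannot invoke the totally-real-subfield criterion directly; instead I would examine the decomposition $\CJ[p^\infty] = \oplus_{\co \in \CO'} \CJ[\p_\co^\infty]$ and verify, orbit by orbit, that the congruence condition forces $\co = \co^*$ for every relevant $\co$ — consulting the tables in Section \ref{sec_table} for $m \le 12$ (rows $m=9$, $a=(1,1,7)$ under $p \equiv 2 \bmod 3$; $m=10$, $a=(1,1,8)$ under $p \equiv 2,3,4 \bmod 5$) and performing the analogous orbit bookkeeping by hand for $m = 14, 15, 16, 21$.

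Finally, having established that the reduction of $\CJ$ is supersingular, I note that $\CJ = \mathrm{Jac}(\cC)$ and $\cC$ has good reduction at $p$ since $p \nmid m$ (as recalled in Section \ref{sec_dim0}, using \cite{SGA1}), so the special fiber of $\cC$ is a smooth supersingular curve of the required genus over $\overline{\FF}_p$; the hypothesis that $p$ is odd is used only to ensure $p \nmid m$ in a couple of the small-$m$ cases and to avoid the degenerate behavior already covered by \cite{VdGVdV}. The main obstacle I anticipate is the composite-$m$ cases: there the clean inertness criterion of Remark \ref{pinert} does not apply, and one must argue that the congruence conditions listed (e.g. $p \equiv 5,17,20 \bmod 21$ for $m=21$) genuinely force self-duality of \emph{all} orbits in $\CO'$ and not merely some of them, which requires careful tracking of how $\langle p \rangle$ acts on $\ZZ/d\ZZ$ for each divisor $d \mid m$ with $d \nmid a(i)$. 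Everything else is a routine but lengthy finite verification.
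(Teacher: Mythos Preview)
Your proposal is correct and follows essentially the same route as the paper: compute the genus via \eqref{Egenus}, then invoke Corollary~\ref{ST_ss} by verifying that every orbit $\co\in\CO'$ is self-dual, using the inertness criterion of Remark~\ref{pinert} for prime $m$ and direct orbit inspection (aided by the tables in Section~\ref{sec_table}) for composite $m$. The paper's own proof is terser---it cites Corollary~\ref{ST_ss} and Remark~\ref{pinert}, works out $m=11$ and $m=10$ as representative examples, and leaves the remaining finite checks implicit---but the underlying argument is the one you describe.
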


\begin{proof}
We compute the table using Corollary \ref{ST_ss}, and Remark \ref{pinert}. 
The genus is determined from \eqref{Egenus}.

For example, for $g=5$,  the congruence classes of $p\bmod 11$  are the quadratic non-residues modulo $11$. 
A prime above $p$ is inert in $K_{11}/K_{11}^+$  if and only if $p$ is a quadratic non-residue modulo $11$.
(The same holds for $g=9$ and $m=19$, 
and also $g=11$ and $m=23$).

For example, for $g=4$, 
the condition $p \equiv 3,7,9 \bmod 10$ covers all the cases $p \equiv 2,3,4 \bmod 5$ since $p$ is odd. For $p\equiv 3,7\bmod 10$, $p$ is inert in $K_5$.
For $p\equiv -1\bmod 10$, each orbit $\co\in\CO'$ is self-dual.
\end{proof}

\begin{remark} \label{Runlikely}
The existence of a smooth supersingular curve of genus $9$, $10$ or $11$ is especially interesting for the following reason. 
The dimension of $\CA_g$ is $(g+1)g/2$ and the dimension of the supersingular locus in $\CA_g$ is $\lfloor g^2/4 \rfloor$.
Thus the supersingular locus has codimension $25$ in $\CA_9$, $30$ in $\CA_{10}$ and $36$ in $\CA_{11}$.
The dimension of $\CM_g$ is $3g-3$ for $g \geq 2$.
Since $\dim(\CM_9) = 24$, $\dim(\CM_{10})=27$, and $\dim(\CM_{11})=30$ the supersingular locus and open Torelli locus form an unlikely intersection in $\CA_9$, $\CA_{10}$
and $\CA_{11}$.
See \cite[Section 5.3]{priesCurrent} for more explanation.
\end{remark}

\begin{remark}
In future work, when $5 \leq g \leq 9$, we prove there exists a smooth supersingular curve of genus $g$ defined over $\overline{\FF}_p$ 
for sufficiently large $p$ satisfying other congruence conditions.
\end{remark}

In the next result, we collect some other new examples of Newton polygons of smooth curves with $p$-rank $0$.

\begin{theorem} \label{TotherNP}
There exists a smooth curve of genus $g$ defined over $\overline{\FF}_p$ with the given Newton polygon
of $p$-rank $0$ in the following cases:
\begin{center}
	\begin{tabular}{  |c|c|c|c|  }
		\hline
		genus &	Newton polygon & congruence & where \\
		\hline
		5  & $(1/5,4/5)$  & $3,4,5,9 \bmod 11$  & $m=11$, $a=(1,1,9)$ \\
		\hline
		5  & $(2/5,3/5)$  & $3,4,5,9 \bmod 11$  & $m=11$, $a=(1,2,8)$ \\
		\hline
		6  & $(1/3,2/3)^2$  & $3,9 \bmod 13$ & $m=13$, $a=(1,2,10)$\\
		  &   & $9,11 \bmod 14$ & $m=13$, $a=(1,1,12)$\\
		\hline
		7  & $(1/4,3/4) \oplus ss^3$  & $2,8 \bmod 15$  & $m=15$, $a=(1,1,13)$   \\
		\hline
		9  & $(4/9,5/9)$  & $4,5,6,9,16,17 \bmod 19$ & $m=19$, $a=(1,2,16)$   \\
		\hline
		9  & $(1/3,2/3)^3$  & $4,5,6,9,16,17 \bmod 19$  & $m=19$, $a=(1,1,17)$   \\
		   & & $7,11 \bmod 19$  & $m=19$, $a=(1,2,16)$   \\
		\hline
		10 & $(1/3,2/3)^3 \oplus ss$ & $p \equiv 2 \bmod 21$ & $m=21$, $a=(1,1,19)$\\
		\hline
		11 & $(1/11, 10/11)$ & $2,3,4,6,8,9,12,13,16,18 \bmod 23$ & $m=23$, $a=(1,4,18)$ \\
		\hline
		11 & $(4/11, 7/11)$ & $2,3,4,6,8,9,12,13,16,18 \bmod 23$ & $m=23$, $a=(1,1,21)$ \\
		\hline
			
	\end{tabular}
\end{center}

\end{theorem}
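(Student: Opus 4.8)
The plan is to apply the Shimura--Taniyama description of Theorem~\ref{thm_ST}, in the convenient form of Proposition~\ref{prop_ST} whenever its hypotheses hold, row by row, exactly as in Example~\ref{Enotsg7}. Several rows require no new work: the two genus-$5$ rows are Example~\ref{Esgg5}, the two genus-$11$ rows are Example~\ref{Esgg11}, and the recipe has already been illustrated for small moduli in the tables of Section~\ref{sec_table}. The essential new computations are thus those with $m\in\{13,14,15,19,21\}$.

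For a fixed row I would first carry out the routine checks: that $(m,a)$ is a monodromy datum (the three conditions of Section~\ref{prelim_curve}), that the genus given by \eqref{Egenus} equals the stated $g$, and that every prime $p$ in the listed congruence class has $p\nmid m$, so that by Section~\ref{sec_dim0} the curve $C_{(m,a)}$ has good reduction at $p$ and $\nu(\CJ)$ at $p$ is computed by Theorem~\ref{thm_ST}. Next I would compute the signature type $\cf=(\cf(\tau_1),\dots,\cf(\tau_{m-1}))$ from the Deligne--Mostow formula \eqref{DMeqn}; on each $\tau_n\in\CT$ whose order divides no $a(i)$ the value $\cf(\tau_n)$ lies in $\{0,1\}$ (proof of Lemma~\ref{lem_CM}), which pins down the sets $S_0$ and $S_1$ of \eqref{EdefS}. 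Then I would determine the subgroup $\hp\subseteq(\ZZ/m\ZZ)^*$ generated by the congruence class of $p$ and list its orbits on the relevant $\tau_n$, equivalently the $\hp$-orbits on $(\ZZ/d\ZZ)^*$ for each $1<d\mid m$ with $d\nmid a(i)$ for all $i$. For each orbit $n\hp$, Theorem~\ref{thm_ST} (Proposition~\ref{prop_ST}) gives the slope $\lambda_{n\hp}=\#(n\hp\cap S_1)/\#(n\hp)$, and the amalgamate sum of these slopes over all orbits is $\nu(\CJ)$ at $p$; comparing with the table entry completes that row. Since the computation exhibits no orbit contained in $S_0$, the slope $0$ does not occur, so the $p$-rank is $0$, as claimed.

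The argument is entirely computational, so the main obstacle is organizing the bookkeeping rather than any conceptual difficulty. The prime moduli $m=13$ and $m=19$ are painless, since in every such row $m=2g+1$ and $\nu(\CJ)$ therefore depends only on the order of $p$ modulo $m$ (as observed after Proposition~\ref{prop_ST}); one only needs to know that each listed residue generates the stated cyclic subgroup. The genuine care is needed for the composite moduli $m=14,15,21$: there one must treat each factor $(\ZZ/d\ZZ)^*$ separately, identify which $\hp$-orbits are self-dual (contributing slope $1/2$) and which occur in dual pairs $n\hp,\,(-n)\hp$ (whose slopes sum to $1$), and verify that all the congruence classes collected into a single row really do generate the same subgroup $\hp$ and hence yield the identical amalgamated Newton polygon. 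In the $m=14$ row one additionally notes that $2\mid a(3)=12$, so the orbit with $d=2$ is excluded and one invokes Theorem~\ref{thm_ST} directly rather than Proposition~\ref{prop_ST}.
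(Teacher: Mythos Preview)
Your proposal is correct and follows essentially the same approach as the paper: both apply the Shimura--Taniyama formula (Theorem~\ref{thm_ST}, via Proposition~\ref{prop_ST} when applicable) orbit by orbit, after computing the genus from \eqref{Egenus} and the signature from \eqref{DMeqn}. The paper's proof works out in detail precisely the two composite-modulus cases $m=15$ and $m=21$ that you flag as requiring the most care, and your observation that the ``$9,11\bmod 14$'' row must have $m=14$ (so that $d=2\mid a(3)$ forces one to invoke Theorem~\ref{thm_ST} directly) is correct.
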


\begin{proof}
We compute the table using the Shimura--Tanayama method, as stated in Proposition \ref{prop_ST}.
The genus is determined from \eqref{Egenus}, and the signature type from \eqref{DMeqn}.

For example, for $m=15$ and $a=(1,1,13)$, the curve $C_{(m,a)}$ has genus 7 and signature type 
$(1,1,\dots ,1,0,0,\dots, 0)$. Let $p$ be a prime such that $p\equiv 2\bmod 15$.
The congruence class of $p$ has order $2$ modulo $3$ and order $4$ modulo $5$. 
Hence the prime $p$ is inert in $K_3$ and in $K_5$, and splits as a product of two primes in $K_{15}$. 
We write $\p_3$ (resp.\  $\p_5$, and $\p_{15},{\p}'_{15}$) for the primes of $K_3$ (resp.\ $K_5$, and $K_{15}$) above $p$.

Continuing this case, by Corollary \ref{ST_ss}, the Newton polygon of $\CJ[\p^\infty_3]$ (resp.\ $ \CJ[\p^\infty_5]) $ has slope 1/2, with multiplicity 1 (resp.\ 2). 
On the other hand, the two orbits in $(\ZZ/15\ZZ)^*$ are $\co=\langle 2\rangle=\{2,4,8,1\}$
and $\co'=7\langle 2\rangle=\{7,14,13,11\}$. (In particular, $\co'=\co^*$, hence $\p'_{15}={\p}^*_{15}$ in $K_{15}$.)
Hence, $\alpha_\co= 3$ and $\alpha_{\co'}=1$.
By Theorem \ref{thm_ST}, the Newton polygon of $\CJ[\p^\infty_{15}]$ (resp.\ $\CJ[{\p'}^\infty_{15}]$) has slope $3/4$ (resp.\ $1/4$).

For example, for $m=21$ and $a=(1,1,19)$, the curve $C_{(m,a)}$ has genus 10 and signature type 
$(1,1,\dots,1,0,0,\dots,0)$. The congruence class of $p \equiv 2 \bmod 21$ has order $2$ modulo $3$  and order $3$ modulo $7$.
Hence the prime $p$ is inert in $K_3$, and splits as a product of two primes in $K_7$ and in $K_{21}$. 
We write $\p_3$ (resp.\  $\p_7,\p'_7$, and $\p_{21},{\p}'_{21}$) for the primes of $K_3$ (resp.\ $K_7$, and $K_{21}$) above $p$. 

Continuing this case, by Corollary \ref{ST_ss}, the Newton polygon of $\CJ[\p^\infty_3]$ has slope 1/2, with multiplicity 1. 
The two orbits in $(\ZZ/7\ZZ)^*$ are $\co_7=\langle 2\rangle=\{2,4,1\}$
and $\co_7'=3\langle 2\rangle=\{3,6,5\}$.
Hence, $a_{\co_7}= 2$ and $a_{\co_7'}=1$.
By Theorem \ref{thm_ST}, the Newton polygon of $\CJ[\p^\infty_{7}]$ (resp.\ $\CJ[{\p'}^\infty_{7}]$) has slope $2/3$ (resp.\ $1/3$).
Similarly, the two orbits in $(\ZZ/21\ZZ)^*$ are $\co_{21}=\langle 2\rangle=\{2,4,8,16,11,1\}$
and $\co_{21}'=5\langle 2\rangle=\{5,10, 20,19,17\}$.  Again $a_{\co_{21}}= 4$ and $a_{\co_{21}'}=2$. 
Thus the Newton polygon of $\CJ[\p^\infty_{21}]$ (resp.\ $\CJ[{\p'}^\infty_{21}]$) has slope $4/6=2/3$ (resp.\ $2/6=1/3$).
\end{proof}

Consider the $p$-divisible group $G_{1, d-1} \oplus G_{d-1,1}$ with slopes $1/d, (d-1)/d$.

\begin{theorem} \label{Tapp2} (Theorem \ref{Tintro2})
For the following values of $d$ and $p$ and for all $g \geq d$, 
there exists a smooth curve of genus $g$ defined over $\overline{\FF}_p$
whose Jacobian has $p$-divisible group isogenous
to $(G_{1, d-1} \oplus G_{d-1,1}) \oplus (G_{0,1} \oplus G_{1,0})^{g-d}$:
\begin{enumerate}
\item $d=5$ for all $p \equiv 3,4,5,9 \bmod 11$;
\item $d=11$ for all $p \equiv 2,3,4,6,8,9,12,13,16,18 \bmod 23$.
\end{enumerate}
\end{theorem}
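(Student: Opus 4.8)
The plan is to run a clutching-and-smoothing induction on $g$, modeled on the proofs for $d=2,3,4$ in \cite{FVdG}, \cite{Pr:large}, \cite{AP:gen}, with the CM curves of Examples \ref{Esgg5} and \ref{Esgg11} supplying the base case $g=d$. For $d=5$ take $m=11$, $a=(1,1,9)$, and for $d=11$ take $m=23$, $a=(1,4,18)$; by those examples, for $p$ in the stated congruence classes the reduction $X_0$ of $C_{(m,a)}$ modulo $p$ is a smooth curve of genus $d$ over $\overline{\FF}_p$ whose Jacobian has Newton polygon $(1/d,(d-1)/d)$, i.e.\ $p$-divisible group isogenous to $G_{1,d-1}\oplus G_{d-1,1}$. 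Let $\xi_g$ denote the target Newton polygon, the amalgamate sum of $(1/d,(d-1)/d)$ with $ord^{g-d}$; it has $p$-rank $f:=g-d$. The crucial elementary observation is that $\xi_g$ is the \emph{minimal} Newton polygon, in the usual partial order, among those of $g$-dimensional abelian varieties with $p$-rank exactly $g-d$: its slope-$0$ and slope-$1$ parts have total length $2(g-d)$, and the complementary part, of height $2d$, is forced to lie on or above the two-segment polygon with slopes $1/d$ and $(d-1)/d$, which is the lowest symmetric polygon of height $2d$ with no slope equal to $0$ or $1$. Hence a \emph{smooth} genus-$g$ curve realizes $\xi_g$ precisely when it has $p$-rank $g-d$ and Newton polygon $\preceq\xi_g$.

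The clutching step is routine. Take ordinary elliptic curves $E_1,\dots,E_{g-d}$ over $\overline{\FF}_p$ and attach them to $X_0$ in a tree, obtaining a stable curve $Y$ of compact type of genus $g$ with $\mathrm{Jac}(Y)\cong\mathrm{Jac}(X_0)\times E_1\times\cdots\times E_{g-d}$. The point $[Y]\in\overline{\CM}_g$ then has $p$-divisible group isogenous to $(G_{1,d-1}\oplus G_{d-1,1})\oplus(G_{0,1}\oplus G_{1,0})^{g-d}$, so it realizes $\xi_g$ and has $p$-rank exactly $f$.

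The hard part is the smoothing step, which I would handle exactly as in \cite[Theorem 4.3]{Pr:large} and \cite[Corollary 5.6]{AP:gen} (which in turn rest on \cite{FVdG}). Write $\CM_g^{\le f}\subseteq\CM_g$ for the locus of smooth genus-$g$ curves whose Jacobian has $p$-rank $\le f$. The deformation theory of \cite{FVdG} (via a boundary count on the divisors $\Delta_0,\dots,\Delta_{\lfloor g/2\rfloor}$ of $\overline{\CM}_g$) shows that the compact-type point $[Y]$ lies in the closure inside $\overline{\CM}_g$ of $\CM_g^{\le f}$, and that every component of $\CM_g^{\le f}$ has dimension exactly $2g-3+f=3g-3-d$; in particular $[Y]$ lies on the closure $\overline W$ of some component $W$ of $\CM_g^{\le f}$. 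Since $\CM_g^{\le f-1}$ has dimension $2g-4+f<\dim W$, the generic curve of $W$ has $p$-rank exactly $f$; and since $[Y]\in\overline W$ has Newton polygon $\xi_g$ while the Newton polygon is lower semicontinuous (it can only rise under specialization), the Newton polygon of the generic curve of $W$ is $\preceq\xi_g$. By the minimality observation of the first paragraph that Newton polygon equals $\xi_g$, so a general $\overline{\FF}_p$-point of $W$ is a smooth curve of genus $g$ whose Jacobian has $p$-divisible group isogenous to $(G_{1,d-1}\oplus G_{d-1,1})\oplus(G_{0,1}\oplus G_{1,0})^{g-d}$, completing the induction.

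I expect the only genuine obstacle to be this last step, and specifically the input from \cite{FVdG}: that the $p$-rank strata of $\CM_g$ have the expected dimension and that the clutched point $[Y]$ lies in the closure of the appropriate stratum. Everything else — the clutching, the lower semicontinuity of the Newton polygon, and the minimality of $\xi_g$ among Newton polygons of $p$-rank $g-d$ — is formal. Since this smoothing machinery is identical to that of \cite[Theorem 4.3]{Pr:large} and \cite[Corollary 5.6]{AP:gen}, the only new ingredient here is the base case, a smooth genus-$d$ curve with Newton polygon $(1/d,(d-1)/d)$, which exists precisely under the stated congruence conditions on $p$ by Examples \ref{Esgg5} and \ref{Esgg11}.
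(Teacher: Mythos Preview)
Your approach is essentially the same as the paper's. The paper also starts from Examples \ref{Esgg5} and \ref{Esgg11}, observes that $(1/d,(d-1)/d)$ is the lowest Newton polygon in dimension $d$ with $p$-rank $0$ (your minimality observation, specialized to $g=d$), and concludes that some component of the $p$-rank $0$ stratum of $\CM_d$ has this as its generic Newton polygon. It then invokes \cite[Corollary~6.4]{priesCurrent} to pass to all $g\geq d$; that corollary packages exactly the clutching-with-ordinary-elliptic-curves and smoothing argument you sketch, with the boundary/dimension input from \cite{FVdG}. So you have correctly unpacked the content of the cited result rather than taking a different route.
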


\begin{proof}
By Example \ref{Esgg5} (resp.\ Example \ref{Esgg11})
for $d=5$ (resp.\ $d=11$), under this congruence condition on $p$, 
there exists a smooth projective curve of genus $g=d$ 
defined over $\overline{\FF}_p$ whose $p$-divisible group is isogenous to $G_{1, d-1} \oplus G_{d-1,1}$.
Note that the Newton polygon for $G_{1, d-1} \oplus G_{d-1,1}$ is the lowest Newton polygon in dimension $d$ with $p$-rank $0$.
Thus there is at least one component of the $p$-rank $0$ stratum of $\CM_d$ 
such that the generic geometric point of this component represents a curve whose 
Jacobian has $p$-divisible group isogenous to $G_{1, d-1} \oplus G_{d-1,1}$.
The result is then immediate from \cite[Corollary~6.4]{priesCurrent}.
\end{proof}

\bibliographystyle{amsplain}
\bibliography{npfirstbib}

\end{document}